 \theoremstyle{plain}
\newtheorem{theorem}{Theorem}
 \newtheorem{lemma}{Lemma}
\newtheorem{remark}{Remark}
\newtheorem{proposition}{Proposition}
\newcommand{\R}{\mathbb{R}}
\def\gf#1#2{g_{#1#2}}
\def\fp#1{${\mathcal F}_{#1}$}
\def\Fp#1{${\mathcal F}_{#1}$}
\def\Lp#1{${\mathcal L}_{#1} $}
\def\fuu#1#2#3{#1_{#2#3}}
 \title[ Lines of Principal Curvature on Ellipsoids of  $\mathbb R^4$  ]{ Umbilic Singularities and  Lines of Curvature on  Ellipsoids of  $\mathbb R^4$}
 \author{   D. Lopes,  J. Sotomayor and R. Garcia }
\begin{document}

%
%

 
\keywords
 {partially umbilic point, ellipsoid, principal lines, principal configuration.  }

\subjclass{ 53C12, 57R30, 37C15}
\begin{abstract}
The topological structure of the lines of principal curvature,  the umbilic and partially umbilic singularities  of
all tridimensional  ellipsoids    of  ${\mathbb R}^4$ is described.

\end{abstract}

\maketitle

\section{Introduction}\label{sec:Intro}

In 1796   G. Monge \cite{mon} determined the first example of
a {\em principal curvature configuration}  on a surface in  $\mathbb R^3$,  consisting
of
the umbilic points (at which the principal curvatures coincide)
and, outside them, the  foliations by the  minimal and maximal
principal curvature lines.
 See also \cite{gas, gutso}.
 This configuration was achieved for the case of the ellipsoid   with
$3$  different axes defined by $q(x,y,z)=x^2/a^2+y^2/b^2+z^2/c^2=1, \,  a>b>c>0$,
where the positive orientation is defined by the normal toward the interior, given by    $ - \nabla q$.
See the illustration in figure \ref{fig:elipr3}.
The cases of  the ellipsoids with  two  different axes (with rotational symmetry, with two umbilic points at the poles)
defined by  $q(x,y,z)=x^2/a^2+y^2/a^2+z^2/b^2= 1$
and only one axis (sphere, which is totally  umbilic)
are
illustrated in  figure \ref{fig:elipR3}.

\begin{figure}[h]
\begin{center}
    \includegraphics[scale=0.6]{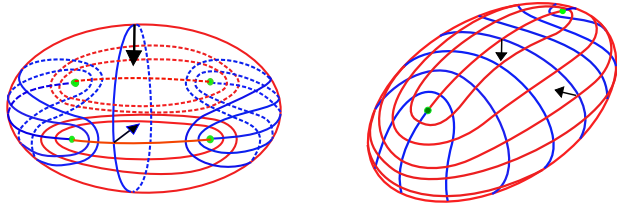}
    \caption{Two views of the global behavior of 4 umbilic
points and principal foliations $\mathcal{F}_i $ on the ellipsoid with 3 different
axes. Positive orientation given by the inner normal:
minimal (maximal) curvature lines in red (blue). When the
positive orientation is given by the outer normal, names and
colors must be interchanged. Umbilic points with $1$  umbilic separatrix, said of type $D_1$, in green.
    }
  \label{fig:elipr3}
    \end{center}
\end{figure}

 \begin{figure}[ht]
\begin{center}
    \def\svgwidth{0.6\textwidth}
    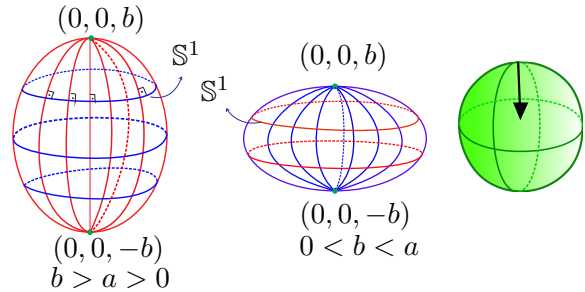
    \caption{Global behavior of the principal foliations on an  ellipsoid of revolution in $\mathbb R^3$ (left and middle).
 and the totally  umbilic sphere (right, green).
 Left:  $b>a>0$ ({\em prolate  ellipsoid}),  the leaves of the maximal principal foliation are the parallels (blue)
 and the minimal leaves are the meridians (red).
 Center: $a>b>0$ ({\em oblate ellipsoid}), the leaves of the maximal foliation are the meridians (blue) and the minimal leaves are the parallels (red). When the positive orientation is given by the outer normal  names and colors must be interchanged. Umbilic points, of center type, in green.
 Right:  $a= b = c$,  {\em sphere}, totally umbilic, green.}
  \label{fig:elipR3}
    \end{center}
\end{figure}

In this paper will  be determined  the principal configurations for
the  ellipsoids in  $\mathbb R^4$, extending in one dimension the results for $\mathbb R^3$
 outlined above and illustrated in figures  \ref{fig:elipr3} and \ref{fig:elipR3}.

By an ellipsoid in  $\mathbb R^4$ is meant the  unit  level hypersurface   $\mathbb E_{a,b,c,d}$ defined (implicitly) by a positive definite
quadratic form $Q$, which, after orthonormal diagonalization, can be written as $Q(x,y,z,w)=x^2/a^2+y^2/b^2+z^2/c^2 +w^2/d^2= 1$,
with  positive   semi-axes:$\; a,\,b,\, c,\, d. $
The positive orientation will be
defined
by the  unit inner normal  $N = \frac{-\nabla Q}{|\nabla Q|}$.

The {\em principal configuration} on an oriented hypersurface  in  $\mathbb R^4$, with euclidean scalar product $\langle \cdot, \cdot \rangle$,   consists on the
 umbilic points, at which  the $3$  principal curvatures coincide, 
 the partially umbilic points, at which only $2$  principal curvatures are equal,   and the integral foliations of the
 three principal line fields on the complement of these sets of points, which will be referred to as the { \em regular part}.

   Recall that the principal curvatures $0 < k_1 \leq k_2 \leq k_3$
 are the eigenvalues of
 the  automorphism $D(-N)$ of the tangent  bundle of  the hypersurface,  taken here as  $\mathbb E_{a,b,c,d}$  with positive unit normal $N$.

Thus the set partially umbilic points is the union of $\mathcal P_{12}$,   where $k_1 = k_2  <  k_3$,  and of   $ \mathcal P_{23}$,  where $k_3 = k_2  >  k_1$.
The set $\mathcal U$ of umbilic points is defined by  $k_1 = k_2  =  k_3$.

The eigenspaces corresponding to the  eigenvalues $k_i$  will be denoted by ${\mathcal L}_i, i= 1, 2, 3.$
 They are line fields,
 well defined
 and, for ellipsoids, also   analytic   on the {\em  regular part}.
 In fact   ${\mathcal L}_1$ is defined and analytic on the complement of
  $ \mathcal U \cup \mathcal P_{12}$,  ${\mathcal L}_3$ is defined and analytic on the complement of
$ \mathcal U \cup  \mathcal P_{23}$  and ${\mathcal L}_2$ is defined and analytic on the complement of
 $ \mathcal U \cup  \mathcal P_{12} \cup  \mathcal P_{23}$.

Given a principal direction $e_i\in {\mathcal L}_i $, consider the plane
tangent to the hypersurface passing through $q$,
having   $e_i(q)$ as the normal vector:

\begin{equation}\label{campoplanos}
\Pi_i(q)=\{(du_1,du_2,du_3);\langle(du_1,du_2,du_3),G\cdot (e_i(q))^T\rangle=0\},
\end{equation}
where $G=[g_{ij}]_{3\times3}$ is the first fundamental form 
and $(u_1,u_2,u_3)$ is a local chart.

Therefore there are three 2-plane fields which are  singular  at the umbilic and  partially umbilic sets. In general these plane fields are not
Frobenius integrable,
\cite{spivak} [vol. 1, chapter 6].
See \cite{de-so-ga}.
Special cases where these plane distributions are integrable  are
the hypersurfaces that  belong to a quadruply orthogonal system,
\cite{dar2}, \cite{eis} as is the case of all ellipsoids studied here.

The work of Garcia  \cite{garcia-tese}  established the generic properties of principal configurations on smooth hypersurfaces.   There was determined the principal configuration on  the ellipsoid  $\mathbb E_{a,b,c,d}$  with four different axes, $a > b > c> d > 0, $ in $\mathbb R^4$. It was proved that it has four closed  regular curves of  generic partially umbilic points  whose transversal structures  are of type $D_1$, as at  the umbilic points in the ellipsoid
 with 
 $3$ different axes in ${\mathbb R}^3$. See Fig. \ref{fig:elipr3}.

 A different proof of this result will be given in Theorem \ref{th:eabcd}  and 
 its conclusions   
 will be illustrated in more detail in  figures  \ref{fig:pabcd} and \ref{fig:conexao}.

 A  complete   description of the  principal configurations 
   on all  tridimensional ellipsoids is established in this paper.

Propositions \ref{prop:eaaab} and \ref{prop:eabcc2} and   Theorem \ref{th:eabcc3},  with their pertinent illustrations
establish the principal configurations on the other ellipsoids in $\mathbb R^4$.

The totally umbilic ellipsoid: $\mathbb E_{a,a,a,a}$ trivally consists in the whole sphere of radius $a$. An illustration would be the same as that in Figure \ref{fig:elipR3}, right.

 The principal configurations increase in complexity  as follows:

 The types $\mathbb E_{b,a,a,a}$ and $\mathbb E_{a,a,a,b}$, corresponding to $3$ equal axes, are studied in Proposition \ref{prop:eaaab}.

 The case of two pairs of equal axes  $\mathbb E_{a,a,b,b}$, found nowhere in the literature,
 is treated in Proposition \ref{prop:eaabb}. See ilustration in Figure \ref{fig:pumb2}.

 The ellipsoids     $\mathbb E_{a,b,c,c}$  and   $\mathbb E_{c,c,a,b}$ of one pair of equal axes, disjoint from the interval of distinct axes, $b <a$ are established in Propositions \ref{prop:eabcc1}
and \ref{prop:eabcc2}. See the illustration
 in figs. \ref{fig:pumb3caso2} and \ref{fig:pumb3caso3}.
The case $\mathbb E_{a,c,c,b}$, treating the case of the double axis inside the interval $(b,a)$ of distinct axes, exhibiting isolated umbilic points,  novel in the literature,
is proved in
 Theorem \ref{th:eabcc3}.  See the illustration
 in figs. \ref{fig:pumb3b} and \ref{fig:pumb3bg}.

Comments and references  concerning other crucial steps focusing on additional aspects of
principal configurations of hypersurfaces in  $\mathbb R^4$ have been given in section \ref{sec:CR}.

\section{Principal Configurations on Ellipsoids  in   $\mathbb R^4$ } \label{sec:PC}

\subsection {Color and Print Conventions for  Illustrations  in this Paper} \label{ss:CC}

The color convention for ellipsoids in $\mathbb R^3$ in  figures \ref{fig:elipr3} and \ref{fig:elipR3},
has been upgraded for $\mathbb R^4$  as follows.

\begin{itemize}
\item [] Black ($- \cdot -\cdot -\cdot$): integral curves of line field  $\mathcal L_1$,

 \item []Blue (\textcolor{blue}{${\mathbf {-----}} $}):  integral curves of line field  $\mathcal L_3$,

  \item [] Red (\textcolor{red}{ \rule{1.9cm}{.02cm}}): integral curves of line field $\mathcal L_2$,

 \item[] Green (\textcolor{green}{${\mathbf { -----}}$} ):  Partially umbilic arcs $\mathcal P_{12}$,

  \item[] Light Blue (\textcolor{Cyan}{${\mathbf {\mathbf{ ----}}}$}):  Partially umbilic arcs $\mathcal P_{23}$,

 \item[] Purple (\textcolor{purple}{ $\bullet $}):  Umbilic Points $\mathcal U$.

 \end{itemize}

The following dictionary has been adopted for illustrations of integral curves appearing in 
 Figs. \ref{fig:pumb1}, \ref{fig:pumb2}, \ref{fig:pumb3caso2}, \ref{fig:pumb3caso3}, \ref{fig:pumb3b}, \ref{fig:pumb3bg},  \ref{fig:cce}, \ref{fig:cce2},  \ref{fig:pabcd}  and \ref{fig:conexao} in this paper when printed in black and white:
  dashed, for blue; dotted-dashed-dotted, for black; full trace, for red.

\subsection{Open Book Structures and Hopf Bands in Ellipsoidal Principal Configurations.}
 
In Propositions \ref{prop:eaaab},  \ref{prop:eaabb},     \ref{prop:eabcc1} and  \ref{prop:eabcc2} and Theorem \ref{th:eabcc3} the ellipsoids
 
exhibit a special structure
of foliations
 with singularities by a family of two-dimensional ellipsoids, all
crossing along a circle or an ellipse, this structure is called a 
{\em  book structure}
with
 {\em binding}, in this case along the circle or ellipse, with ellipsoidal {\em pages}.
 See \cite{gir}, \cite{gir1} and \cite{wk}.

Let $M$ be a 3-manifold and $S$ a regular curve on $M$, i.e., a submanifold of codimension two.

An {\em open book structure}  on $M$ is a smooth fibration $p:M\setminus S\to\mathbb S^1$ satisfying the following  conditions:
 i) For all $\theta \in \mathbb S^1$,  the closure of $p^{-1}(\theta)$ contains $S$ and is a regular surface, a {\em page}.  ii) The submanifold $S$  coincides with  $\bigcap_{\theta\in \mathbb S^1} cl[p^{-1}(\theta)] $
     and is called the {\em binding}.

   The following geometric structure will appear in the description of partially umbilic separatrix surfaces
 in the principal configurations in sub\-sec\-tions  \ref{ss:eabcc}, \ref{ss:eccab} and \ref{ss:eaccb}.

A {\em Hopf band} in $ {\mathbb E}_{a,b,c,d} $  is an embedding   $\beta:\mathbb S^1\times [0,1]\to {\mathbb E}_{a,b,c,d}$ such that $S_1=\beta({\mathbb S}^1\times \{0\})$, $S_2=\beta({\mathbb S}^1\times \{1\})$  are linked curves with linking number equal to $\pm 1$, see \cite{gir1}.

\subsection{Three  equal axes: $\mathbb E_{a,a,a,b}$ and $\mathbb E_{b,a,a,a}$.}\label{ss:eaaab}

\begin{proposition}\label{prop:eaaab}
Consider the ellipsoid $\mathbb E_{a,a,a,b}$ defined by  $$\frac{x^2+y^2+z^2}{a^2}+\frac{w^2}{b^2}=1,\;\; a^2 \ne b^2 \ne 0.$$
Then the umbilic set   consists of two points $( 0,0,0,\pm b)$,   the partially umbilic set is the open
 set $\mathbb E_{a,a,a,b}\setminus \{( 0,0,0,\pm b)\}$.  One   principal foliation is regular on  $\mathbb E_{a,a,a,b}\setminus \{(0,0,0,\pm b)\}$
  and consists of the integral curves of the gradient of the $w-$ projection.
The distribution defined by the partially
umbilic $2-$planes has as integral foliation the level spheres of this projection.
Figure  \ref{fig:pumb1}, right,  illustrates this principal configuration. The same figure, left,
illustrates  the ellipsoid $\mathbb E_{b,a,a,a}$.

\begin{figure}[h]
\begin{center}
\def\svgwidth{0.60\textwidth}
    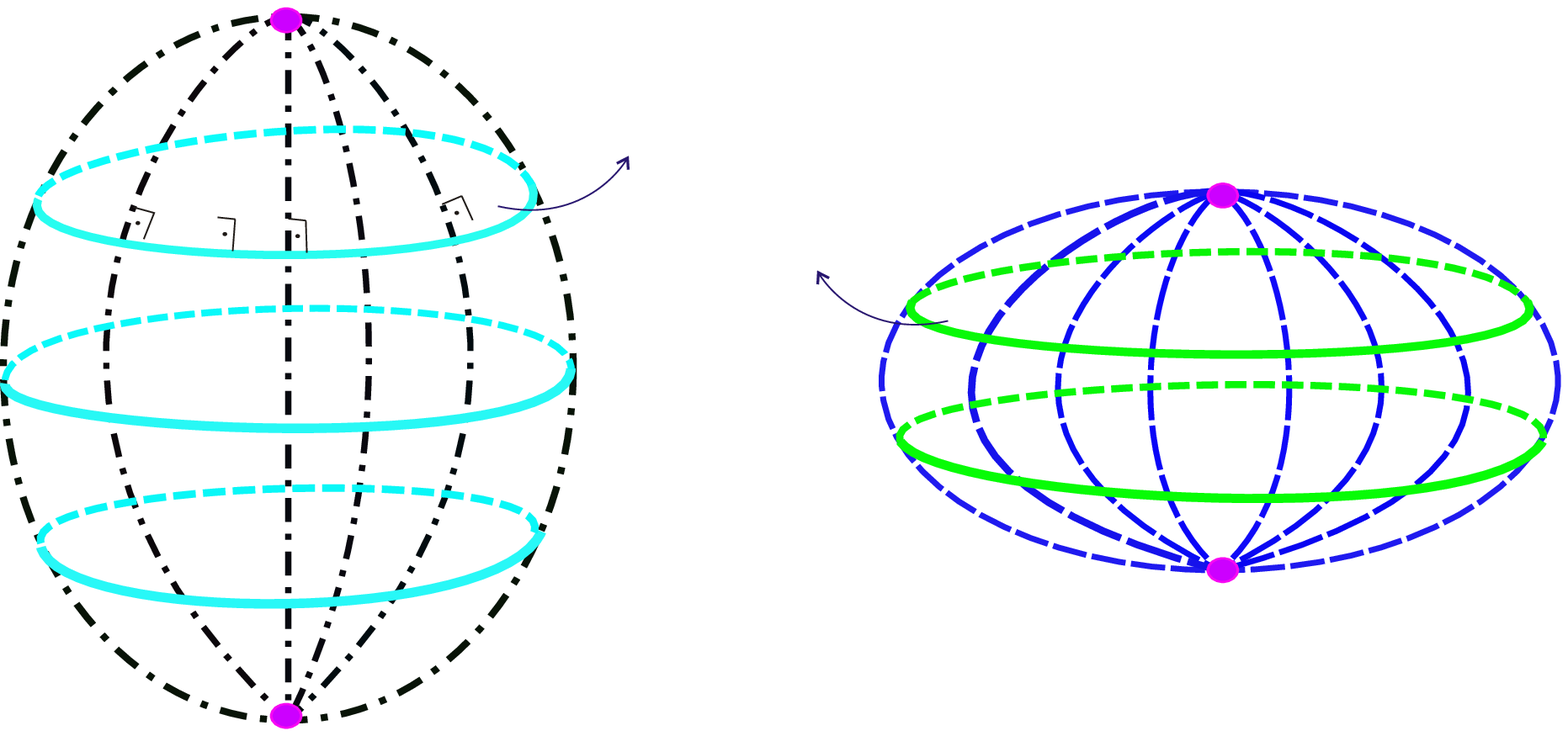
    \caption{Global behavior of the regular principal configurations  for Ellipsoids $\mathbb E_{b,a,a,a}$, left,  and $\mathbb E_{a,a,a,b}$, right. Foliations  $\mathcal{F}_1 $ (left, black $- \cdot - \cdot -$) and
      $\mathcal{F}_3$    (right, blue, ${\mathbf  ------} $).}  \label{fig:pumb1}
    \end{center}
\end{figure}
\end{proposition}

 \begin{proof} Consider the parametrization $\alpha:
[0,2\pi]\times (0,\pi)\times (-b,b)\to \mathbb E_{a,a,a,b}$ defined by:
{\small
$$\alpha(u,v,w)=(\frac{a\sqrt{b^2-w^2}}{b}\cos u\sin v,\frac{a\sqrt{b^2-w^2}}{b} \sin u \sin v, \frac{a\sqrt{b^2-w^2}}{b}\cos v, w).$$
}
The principal curvatures are given by:

$$k(u,v,w)=l(u,v,w)=\frac{b^2}{a\Delta},\;\; m(u,v,w)=\frac{ab^4}{\Delta^{3}},\; \Delta=\sqrt{(a^2-b^2)w^2+b^4}.$$
 
It follows that $k_1= k = l = k_2< k_3 = m $ when $a>b>0$ and  that $k_3 = k =k_2 = l > m = k_1$ when $0<a<b.$

The parametrization $\alpha$ does not cover the ellipse $x= 0, \, y = 0$
 in  $\mathbb E_{a,a,a,b}$ .
 
To analyze the principal configuration  around it   consider the parametrization
$$\beta(u,v,t)=(u,v,0,0)+ \sqrt{a^2-u^2-v^2}(0,0,  \cos t, \frac ba \sin t).$$
Calculation gives  the following expressions for the principal curvatures:

$$\aligned k_1(0,0,t)=&k_2(0,0,t)=\frac{b}{a\Delta}, \;\; k_3(0,0,t)=\frac{ab}{\Delta^{3}},\\
\Delta=& \sqrt{b^2\cos^2t+a^2\sin^2 t}.\endaligned$$

For $t=\pm \pi/2$ it follows that $k_1=k_2=k_3=\frac{b}{a^2}$ that correspond to the two umbilic points $(0,0,0,\pm b)$.

In the parametrization $\alpha$ the two fundamental forms $g_{ij}$ and $b_{ij}$  are diagonal and only one principal foliation is regular outside the two umbilic points, thus
it follows that the
partially
 umbilic plane field, orthogonal to the principal   regular  direction,  is integrable and the spheres
 given by $w=cte$ are
its integral  leaves.
Therefore the integral curves of the regular principal foliation are the trajectories of the gradient vector field  of the projection $\pi(x,y,z,w)=w$ with respect to the metric $g=(g_{ij})$,$(g_{ij}=\langle \partial \alpha/\partial u_i,\partial \alpha/\partial u_j\rangle )$  induced by $\alpha$.
\end{proof}

\subsection{Two pairs of equal axes: $\mathbb E_{a,a,b,b}$}\label{ss:eaabb}

\begin{proposition}\label{prop:eaabb}
Consider the ellipsoid $\mathbb E_{a,a,b,b}$ defined by  $$\frac{x^2+y^2}{a^2}+\frac{z^2+w^2}{b^2}=1,\;\; a>b>0.$$
Then it follows that:

\noindent i) The  umbilic set   is empty and the partially umbilic set is the union of two regular
circular
curves ${\mathcal P}_{23}=(a\cos u, a\sin u,0,0)$ and
${\mathcal P}_{12}=(0,0,b\cos v, b\sin v)$. The curves ${\mathcal P}_{12}$ and ${\mathcal P}_{23}$ are linked in $\mathbb E_{a,a,b,b}$.

\noindent ii) The behavior of the principal foliations near
the partially umbilic curves is   illustrated  in Fig. \ref{fig:pumb2}.
The
intermediate
foliation \Fp 2 is singular on  ${\mathcal P}_{12}\cup {\mathcal P}_{23} $, while \Fp 1 is singular
 only on  ${\mathcal P}_{12}$ and \Fp 3 is singular only on ${\mathcal P}_{23}$.

\noindent iiii)  All regular leaves of \Fp 1 and \Fp 3 are circles and the leaves of \Fp 2 are
arcs of ellipses with boundary points located at
${\mathcal P}_{12}\cup {\mathcal P}_{23}$.

\begin{figure}[h]
\begin{center}
 \def\svgwidth{0.80\textwidth}
    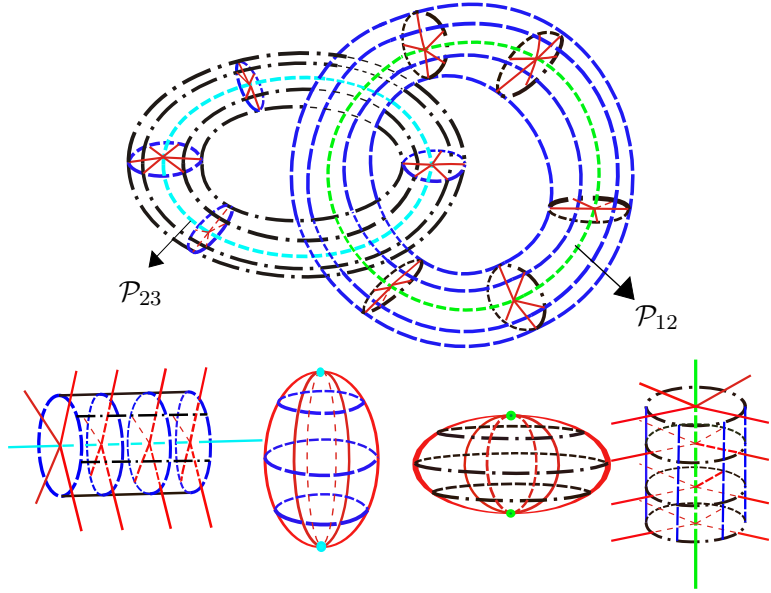
    \caption{{  
Top: Global behavior of the Principal Foliations ${\mathcal F}_i$
Bottom, center left: Ellipsoid of revolution whose poles slide along
one of partially umbilic closed line (green) and whose equator contains
the other partially umbilic closed line (light blue). They are
leaves of integral foliation of the plane distributions spanned by
$\mathcal L_2$  and $\mathcal L_3$, attaching the poles (green), illustrated locally in the
extreme left. Observe the book structure with binding along the
light blue circle, whose pages are these ellipsoids. Bottom, center
right: Ellipsoid of revolution whose poles slide along one of partially
umbilic closed line (light blue) and whose equator contains
the other partially umbilic closed line (green). They are leaves of the
integral foliation of the plane distribution  spanned by $\mathcal L_2$ and $\mathcal L_1$,
attaching the poles (light blue), illustrated locally in the extreme
right. Observe the book structure with binding along the green
circle, whose pages are these ellipsoids.
     }}
  \label{fig:pumb2}
    \end{center}
\end{figure}

\end{proposition}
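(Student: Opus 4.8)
The plan is to exploit the $SO(2)\times SO(2)$ symmetry of $\mathbb E_{a,a,b,b}$ through the parametrization
$$\alpha(r,u,v)=(a\cos r\cos u,\ a\cos r\sin u,\ b\sin r\cos v,\ b\sin r\sin v),\qquad r\in(0,\tfrac\pi2),\ u,v\in\mathbb R/2\pi\mathbb Z,$$
which covers $\mathbb E_{a,a,b,b}\setminus({\mathcal P}_{12}\cup{\mathcal P}_{23})$, the two circles of the statement being exactly the limits $r=\tfrac\pi2$ and $r=0$. First I would compute the two fundamental forms with respect to $N=-\nabla Q/|\nabla Q|$ and observe that \emph{both are diagonal} in the coordinates $(r,u,v)$: writing $W=\sqrt{\cos^2 r/a^2+\sin^2 r/b^2}$ one gets $g_{uu}=a^2\cos^2 r$, $g_{vv}=b^2\sin^2 r$, $g_{rr}=a^2\sin^2 r+b^2\cos^2 r$ and shape-operator eigenvalues $b_{uu}/g_{uu}=\tfrac1{Wa^2}$, $b_{vv}/g_{vv}=\tfrac1{Wb^2}$, $b_{rr}/g_{rr}=\tfrac1{W(a^2\sin^2 r+b^2\cos^2 r)}$.

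From here everything follows algebraically. Since $b^2\le a^2\sin^2 r+b^2\cos^2 r\le a^2$ with equality only at $r=0$, resp.\ $r=\tfrac\pi2$, and $a>b$, one obtains the strict ordering $\tfrac1{Wa^2}<\tfrac1{W(a^2\sin^2 r+b^2\cos^2 r)}<\tfrac1{Wb^2}$ on $(0,\tfrac\pi2)$; hence on the regular part $k_1,k_2,k_3$ are the $u$-, $r$-, $v$-eigenvalues, with $k_1=k_2$ exactly on $\{r=\tfrac\pi2\}={\mathcal P}_{12}$ and $k_2=k_3$ exactly on $\{r=0\}={\mathcal P}_{23}$, while $k_1=k_2=k_3$ is impossible (it would force $a=b$). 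This proves (i) apart from the linking claim. Diagonality of $G$ and $B$ makes $G^{-1}B$ diagonal, so ${\mathcal L}_1=\mathbb R\,\partial_u\alpha$, ${\mathcal L}_2=\mathbb R\,\partial_r\alpha$, ${\mathcal L}_3=\mathbb R\,\partial_v\alpha$ on the regular part, and the integral curves are the coordinate curves: the $u$-curves are circles of radius $a\cos r_0$, the $v$-curves are circles of radius $b\sin r_0$, and the $r$-curves are the quarter-ellipses $r\mapsto\cos r\,(a\cos u_0,a\sin u_0,0,0)+\sin r\,(0,0,b\cos v_0,b\sin v_0)$ joining a point of ${\mathcal P}_{23}$ (at $r=0$) to a point of ${\mathcal P}_{12}$ (at $r=\tfrac\pi2$); this yields (iii) on the regular part.

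To handle the partially umbilic circles themselves, and so finish (ii), I would invoke the regularity statements from Section~\ref{sec:Intro}: since ${\mathcal U}=\emptyset$, ${\mathcal L}_1$ is analytic on $\mathbb E_{a,a,b,b}\setminus{\mathcal P}_{12}$ and ${\mathcal L}_3$ on $\mathbb E_{a,a,b,b}\setminus{\mathcal P}_{23}$, whereas the doubled eigenvalue forces ${\mathcal F}_1$ to be singular exactly on ${\mathcal P}_{12}$, ${\mathcal F}_3$ exactly on ${\mathcal P}_{23}$, and ${\mathcal F}_2$ on ${\mathcal P}_{12}\cup{\mathcal P}_{23}$. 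To make the extension across ${\mathcal P}_{23}$ explicit — in particular that ${\mathcal P}_{23}$ is itself a leaf of ${\mathcal F}_1$, so that all leaves of ${\mathcal F}_1$ are circles, and symmetrically for ${\mathcal F}_3$ and ${\mathcal P}_{12}$ — I would switch to the regular analytic parametrization $\eta(u,z,w)=\big(a\sqrt{1-(z^2+w^2)/b^2}\,\cos u,\ a\sqrt{1-(z^2+w^2)/b^2}\,\sin u,\ z,\ w\big)$ near ${\mathcal P}_{23}$ (and its mirror near ${\mathcal P}_{12}$) and check that $\partial_u\eta|_{z=w=0}$ is the $k_1$-eigendirection; this is the analogue of the parametrization $\beta$ used in the proof of Proposition~\ref{prop:eaaab}.

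Finally, for the linking of ${\mathcal P}_{12}$ and ${\mathcal P}_{23}$ in $\mathbb E_{a,a,b,b}\cong\mathbb S^3$, the affine diffeomorphism $(x,y,z,w)\mapsto(x/a,y/a,z/b,w/b)$ sends $\mathbb E_{a,a,b,b}$ to the unit sphere and ${\mathcal P}_{23},{\mathcal P}_{12}$ to the fibres of the Hopf fibration over two antipodal points of $\mathbb S^2$, whose linking number is $\pm1$ (see \cite{gir1}). The one genuinely delicate step in the whole argument is pinning down the behavior along the partially umbilic circles, where $\alpha$ degenerates and one must pass to an auxiliary regular chart; everything else is bookkeeping with diagonal fundamental forms.
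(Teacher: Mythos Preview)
Your proposal is correct and follows essentially the same route as the paper: the toroidal parametrization (the paper's $\alpha(u,v,t)$ with $t\in(0,\pi)$ is your $\alpha(r,u,v)$ up to relabeling), diagonality of both fundamental forms giving coordinate curves as principal lines, the explicit curvatures, and an auxiliary chart $\bar\alpha$ (your $\eta$) near each partially umbilic circle to handle the degeneration. Your ordering $k_1<k_2<k_3\leftrightarrow(\partial_u,\partial_r,\partial_v)$ is in fact stated more carefully than in the paper, and your linking argument via the affine map to $\mathbb S^3$ and Hopf fibres is equivalent to the paper's observation that the two circles lie in complementary coordinate $2$-planes.
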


\begin{proof} Consider the parametrization $$\alpha(u,v,t)=(a\cos u \cos t, a\sin u\cos t,b\cos v\sin t, b\sin v\sin t)$$
with $ 0\leq u \leq 2\pi$, $ 0\leq v \leq 2\pi$ and $ 0< t<\pi$.

A non unitary  normal   field is given by:
$$N(u,v,w)=(b\cos u\cos t, b\sin u\cos t, a\cos v\sin t, a\sin v \sin t).$$
Direct analysis shows that $\alpha$ is a regular parametrization, the coordinate curves are principal lines and  
are given by
$$\aligned k_1(u,v,t)=& \frac{b}{a\Delta } , \;\; k_2(u,v,t)=\frac{a}{b\Delta}, \\
 k_3(u,v,t)=&\frac{ab}{\Delta^3},
\;\;
\Delta= [a^2\sin^2 t+b^2\cos^2 t]^ {\frac 12}.\endaligned$$
Therefore it follows that $k_1<k_2<k_3$ when $a>b>0$ and $0<t<{\pi}$.

For $t=t_0\in (0,\pi)$ it follows that $\alpha_{t_0}(u,v)=\alpha(u,v,t_0)$ is a parametrization of a  torus invariant by two principal foliations.

For $u=u_0\in [0,2\pi] $ it follows that $\alpha_{u_0}(v,t)=\alpha(u_0,v,t)$ is a parametrization of an ellipsoid  of revolution, contained in the hyperplane $\sin u_0 x- \cos u_0 y=0$, invariant by two principal foliations. Analogous
conclusion
holds
 when $v =v_0\in [0,2\pi] $.

The ellipsoid is decomposed as follows,  the open set    $\mathbb E_{a,a,b,b}\setminus ({\mathcal P}_{12}\cup {\mathcal P}_{23})$ is foliated by tori and the two linked curves ${\mathcal P}_{12}$ and ${\mathcal P}_{23}$ are
singular leaves of this decomposition. Each torus is foliated by two one dimensional principal ones. The
intermediate
foliation \fp 2 is singular on ${\mathcal P}_{12}\cup {\mathcal P}_{23} $, while \fp 1 is singular only on ${\mathcal P}_{12}$ and \fp 3 is singular only on ${\mathcal P}_{23}$.

For $w=0$ and $w=\pi$ the parametrization above is singular.
To carry out the analysis  near the curve  ${\mathcal P}_{12}$,
given by $\alpha(u,v,0)=(a\cos u, a\sin u,0,0)$,
consider the following parametrization.
$$\bar{\alpha}(u,v,w)=(a\cos u \frac{\sqrt{b^2-v^2-w^2}}{b},a\sin u \frac{\sqrt{b^2-v^2-w^2}}{b},v,w).$$
It follows that $k_1(u,0,0)=\frac{1}a,\; k_2(u,0,0)=k_3(u,0,0)=\frac{a}{b^2}$.

The principal directions
${\mathcal L}_i(\bar{\alpha})$
are defined by the following differential equation:
$$\aligned du=&0,\;\; -vw(dv^2-dw^2)+(v^2-w^2)dvdw=0,\;\; {\mathcal L}_2(\bar{\alpha}) \;\text{and}\; {\mathcal L}_3(\bar{\alpha})\\
 dw=&0, \;dv=0,  \;\;{\mathcal L}_1(\bar{\alpha}).
  \endaligned$$
 For each $u=u_0$ fixed, $\bar{\alpha}_{u_0}(v,w)=\bar{\alpha}(u_0,v,w)$ is a surface contained in the hyperplane $\sin u_0 x-\cos u_0y=0$, invariant by two principal foliations and the principal configuration on $\alpha_{u_0}$ is equivalent to that of an ellipsoid of revolution of $\mathbb R^3$.

Similar analysis is valid to  establish the principal configuration near 
${\mathcal P}_{23}$.

 The two partially umbilic curves ${\mathcal P}_{12}$ and ${\mathcal P}_{23}$ are linked since
 they are contained in the planes $(x,y,0,0)$ and $(0,0,z,w)$, respectively.
\end{proof}

\subsection{One  pair of equal axes: $\mathbb E_{a,b,c,c}$}\label{ss:eabcc}

\begin{proposition}\label{prop:eabcc1}
Consider the ellipsoid $\mathbb E_{a,b,c,c}$ defined by  
$$
\frac{x^2}{a^2}+\frac{y^2}{b^2}+\frac{z^2+w^2}{c^2}=1,\;\; a>b>c>0.$$
Let   $E_{0}=\{(x,y,0,0): \frac{x^2}{a^2}+\frac{y^2}{b^2}  =1\}.$

Then it follows that:

\noindent i) The umbilic set   is empty and the partially umbilic set is the union of three regular curves ${\mathcal P}_{12},\; {\mathcal P}_{23}^1,\; {\mathcal P}_{23}^2$. The curves ${\mathcal P}_{23}^1$ and $ {\mathcal P}_{23}^2$  are circles contained in the planes
$(\pm a\sqrt{\frac{a^2-b^2}{a^2-c^2}},0,z,w)$.
 The pairs of curves $\{ {\mathcal P}_{12},\; {\mathcal P}_{23}^1\}$ and $\{ {\mathcal P}_{12},\; {\mathcal P}_{23}^2\}$  are linked while
 the pair
 $\{ {\mathcal P}_{23}^1,\; {\mathcal P}_{23}^2\}$  is not linked.

\noindent ii) The ellipsoid $\mathbb E_{a,b,c,c} $ is foliated  by a pencil of two dimensional surfaces, bidimensional ellipsoids, all passing through the ellipse ${\mathcal P}_{12}$.
 
Each surface  
 is an integral leaf of the distribution by planes generated by \Lp 1 and \Lp 3, and there the restricted    principal configuration is principally equivalent to that of
the bi-dimensional ellipsoid, with 3 different axes.

\noindent iii) 
The principal foliation \Fp 2 is singular on ${\mathcal P}_{12}\cup {\mathcal P}_{23}^1\cup {\mathcal P}_{23}^2$  and all its regular leaves are closed curves orthogonal to the pencil of surfaces
with book structure around ${\mathcal P}_{12}$.
The behavior of the principal foliations near
the partially umbilic curves is illustrated  in Fig. \ref{fig:pumb3caso2}.

\end{proposition}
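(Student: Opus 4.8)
The plan is to exploit the rotational symmetry of $\mathbb E_{a,b,c,c}$ in the $(z,w)$--plane so as to reduce everything to the classical analysis of the triaxial $2$--ellipsoid, plus the computation of one extra (``rotational'') principal curvature. For $\theta\in[0,\pi)$ let $H_\theta=\{z\sin\theta=w\cos\theta\}$, and inside $H_\theta$ use the coordinates $(x,y,s)$ with $z=s\cos\theta,\ w=s\sin\theta$; then $\mathbb E_{a,b,c,c}\cap H_\theta$ is the $2$--ellipsoid $x^2/a^2+y^2/b^2+s^2/c^2=1$ with three distinct axes $a>b>c$, and these surfaces form a pencil sweeping out $\mathbb E_{a,b,c,c}$, all of them containing $E_0=\{z=w=0\}$ and meeting pairwise exactly along $E_0$. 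This $E_0$ is the binding $\mathcal P_{12}$. As a working chart I would take $(\xi,\eta,\theta)\mapsto(x(\xi,\eta),y(\xi,\eta),s(\xi,\eta)\cos\theta,s(\xi,\eta)\sin\theta)$, with $(\xi,\eta)$ the confocal coordinates on the $2$--ellipsoid, so that on each page the coordinate curves are already its lines of curvature.

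Let $X=-w\,\partial_z+z\,\partial_w$ be the generator of the $(z,w)$--rotations, restricted to $\mathbb E_{a,b,c,c}$. Since $\nabla Q$ is linear one has $D_X(\nabla Q)=(2/c^2)X$ and $X(|\nabla Q|)=0$, so the Weingarten map $D(-N)=D(\nabla Q/|\nabla Q|)$ satisfies $D(-N)X=\lambda X$ with $\lambda=2/(c^2|\nabla Q|)=1/(c^2\sqrt\sigma)$, where $\sigma=x^2/a^4+y^2/b^4+(z^2+w^2)/c^4$. Thus $X$ spans a principal line field off $E_0$ --- it is the field $\mathcal L_2$ of the proposition --- and on $E_0$ the symmetry forces $D(-N)$ to be $\lambda\cdot\mathrm{Id}$ on the $(z,w)$--plane. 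Moreover $\nabla Q$ has equal $z$-- and $w$--weights, so $N$ is tangent to every $H_\theta$ and hence is the unit normal of the page inside the Euclidean $3$--space $H_\theta$; therefore the second fundamental form of $\mathbb E_{a,b,c,c}$ restricted to the tangent plane of a page equals that of the $2$--ellipsoid, and since $X$ is orthogonal to that plane, the plane is $D(-N)$--invariant. This already proves (ii): each page is an integral leaf of $\mathcal L_1\oplus\mathcal L_3=X^\perp$, and the restricted configuration on it is exactly that of the triaxial $2$--ellipsoid (four umbilic points, two regular principal foliations). In particular, at an arbitrary point the three principal curvatures of $\mathbb E_{a,b,c,c}$ are $\lambda$ together with the two principal curvatures $\mu_1\le\mu_2$ of the page through it.

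For (i), coincidences happen in two ways. First, $\mu_1=\mu_2$ precisely at the four umbilic points of each page, namely $y=0,\ x=\pm a\sqrt{(a^2-b^2)/(a^2-c^2)},\ s=\pm c\sqrt{(b^2-c^2)/(a^2-c^2)}$; as $\theta$ runs over $[0,\pi)$ these trace two circles of radius $c\sqrt{(b^2-c^2)/(a^2-c^2)}$ in the planes $\{x=\pm a\sqrt{(a^2-b^2)/(a^2-c^2)},\ y=0\}$, which are $\mathcal P_{23}^1$ and $\mathcal P_{23}^2$. Secondly, using the classical expressions $\mu_1\mu_2=1/(\sigma^2a^2b^2c^2)$ and $\mu_1+\mu_2=(a^2+b^2+c^2-x^2-y^2-z^2-w^2)/(\sigma^{3/2}a^2b^2c^2)$ for the $2$--ellipsoid, a direct computation subject to $x^2/a^2+y^2/b^2+(z^2+w^2)/c^2=1$ yields the identity
$$\lambda^2-(\mu_1+\mu_2)\lambda+\mu_1\mu_2=\frac{(z^2+w^2)(a^2-c^2)(b^2-c^2)}{a^2b^2c^6\,\sigma^2}\ \ge\ 0,$$
which vanishes exactly on $E_0$. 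Hence $\lambda$ never lies strictly between $\mu_1$ and $\mu_2$ and equals one of them only on $E_0$; consequently the partially umbilic set is exactly $E_0\cup\mathcal P_{23}^1\cup\mathcal P_{23}^2$ (three regular curves), evaluating the curvatures at a single point of each curve fixes its type, and there are no umbilic points --- at a page--umbilic $\mu_1=\mu_2=ac/b^3$ while $\lambda=a/(bc)$, and these differ because $b\ne c$.

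Finally, the linking statements and (iii). The map $p:\mathbb E_{a,b,c,c}\setminus E_0\to\mathbb S^1,\ p(x,y,z,w)=\arg(z+iw)$, is a fibration whose closed fibres are the half--pages, each a disc with boundary $E_0$; this is the open book with binding $\mathcal P_{12}=E_0$, and each half--page is a Seifert surface for $E_0$. Each circle $\mathcal P_{23}^i$ meets a given half--page in exactly one point, an umbilic of that page, and there its tangent is $X$, which is normal to the page; so the crossing is transverse and $\mathrm{lk}(\mathcal P_{12},\mathcal P_{23}^i)=\pm1$. On the other hand $\mathcal P_{23}^1$ and $\mathcal P_{23}^2$ lie in the two disjoint balls $\{x>0\}\cap\mathbb E_{a,b,c,c}$ and $\{x<0\}\cap\mathbb E_{a,b,c,c}$, so they bound disjoint discs and are unlinked. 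For (iii): the leaves of $\mathcal F_2$ are the circles $\{x=\mathrm{const},\ y=\mathrm{const},\ z^2+w^2=\mathrm{const}\}$, the orbits of $X$; they are closed and, $X$ being orthogonal to every page, orthogonal to the pencil, and $\mathcal F_2$ is singular exactly where $X$ vanishes ($E_0$) or where its eigenvalue is repeated ($\mathcal P_{23}^1\cup\mathcal P_{23}^2$), i.e. on $\mathcal P_{12}\cup\mathcal P_{23}^1\cup\mathcal P_{23}^2$, with book structure around $\mathcal P_{12}$ as above. The step I expect to be the main obstacle is the displayed identity in (i): one must hit on evaluating $\lambda^2-(\mu_1+\mu_2)\lambda+\mu_1\mu_2$ and push the algebra through, and then argue that the coincidence loci are exactly those three curves and not larger strata; the rest is Monge's analysis of the triaxial $2$--ellipsoid transported to the pages plus routine linking bookkeeping.
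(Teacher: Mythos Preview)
Your approach and the paper's share the same backbone: both exploit the $(z,w)$--rotational symmetry to slice $\mathbb E_{a,b,c,c}$ into a pencil of triaxial $2$--ellipsoids through $E_0$ and then transfer Monge's picture to the pages. The paper does this by writing down two explicit charts $\alpha,\beta$, computing the principal curvatures along $E_0$ and along the circles $\gamma_\pm$, and invoking the orthogonality of the intersection $\pi_\theta\cap\mathbb E_{a,b,c,c}$ to conclude that page--lines are hypersurface--lines; it does not argue that there are no \emph{other} partially umbilic points, nor does it justify the linking claims. Your route is tidier: you show directly that the rotation generator $X$ is a Weingarten eigenvector with eigenvalue $\lambda=1/(c^{2}\sqrt\sigma)$, that $X^{\perp}$ is shape--operator invariant (hence the pages carry the other two principal directions), and your identity $(\lambda-\mu_1)(\lambda-\mu_2)\propto z^{2}+w^{2}$ gives a global characterisation of the partially umbilic set. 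The Seifert--surface linking argument is also an addition.

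There is, however, a genuine internal inconsistency. Your identity forces $(\lambda-\mu_1)(\lambda-\mu_2)\ge 0$, so $\lambda$ is never strictly between $\mu_1$ and $\mu_2$; checking at $(0,0,c,0)$ gives $\lambda=1/c>c/b^{2}=\mu_2$, and by continuity $\lambda=k_3$ everywhere off $E_0$. Hence $X$ spans $\mathcal L_3$, not $\mathcal L_2$, $E_0\subset\mathcal P_{23}$, and the two circles lie in $\mathcal P_{12}$ --- the opposite of what you (and the proposition) say. Your sentence ``it is the field $\mathcal L_2$ of the proposition'' is therefore incompatible with the inequality you prove two paragraphs later; you should either swap the indices or flag that the statement's labels are reversed. (The paper's own proof is muddled on exactly this point: it asserts $k_1=k_2<k_3$ along $E_0$ while the values it records satisfy $k_1<k_2=k_3$, and its parenthetical colour for $\gamma_\pm$ contradicts its own convention.) Two smaller slips: the denominator in your displayed identity should carry $c^{8}$, not $c^{6}$; and ``$\mathcal F_2$ is singular where $X$ vanishes or where its eigenvalue is repeated'' is correct as a description of where the rotational line field degenerates, but once you know $X=\mathcal L_3$ the singular set of $\mathcal F_2$ is the same three curves for the reason that $k_2$ coincides with $k_1$ on the circles and with $k_3$ on $E_0$.
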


\begin{figure}[h]
\begin{center}
    \def\svgwidth{0.8\textwidth}
    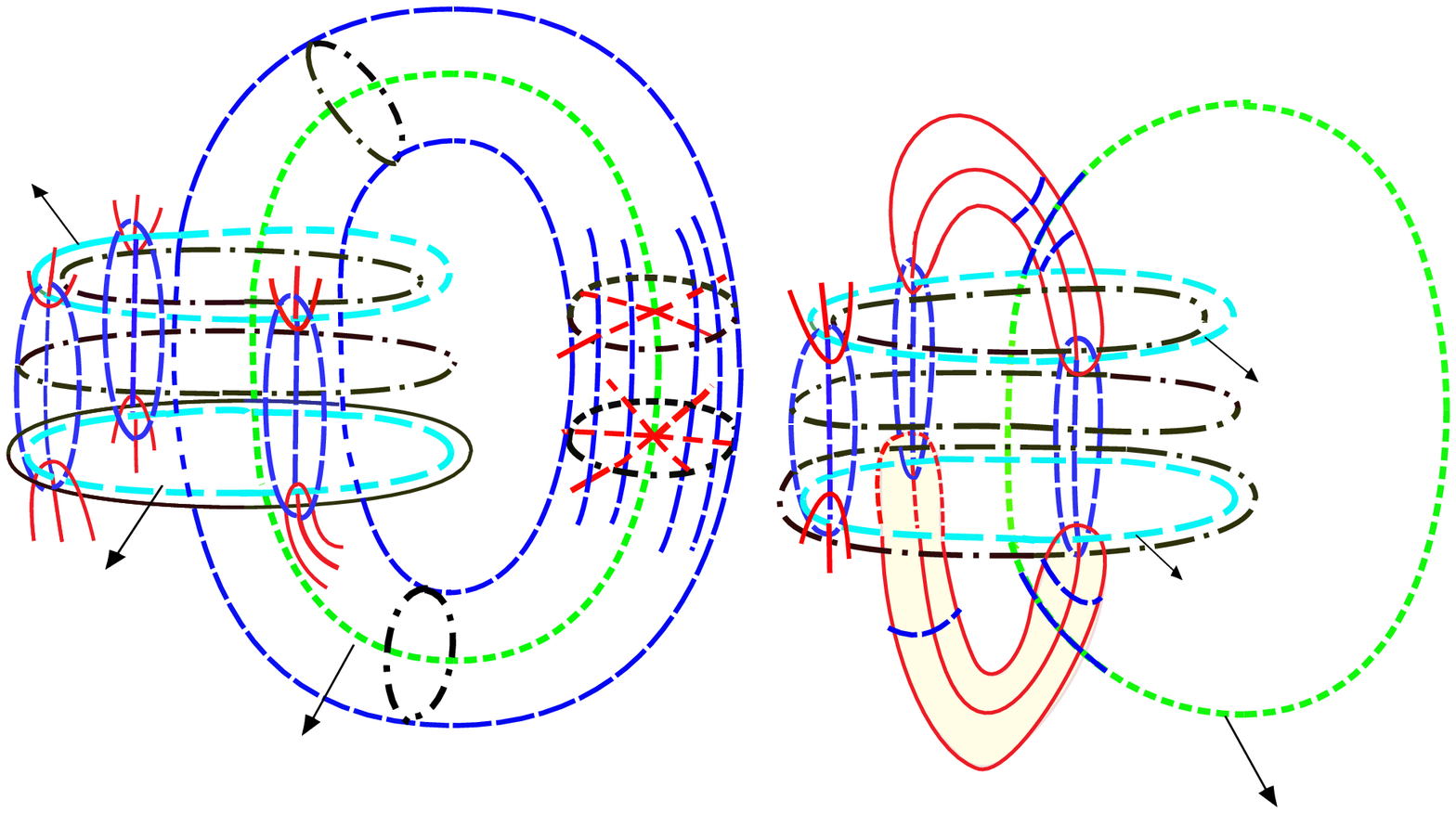
    \includegraphics[scale=0.5]{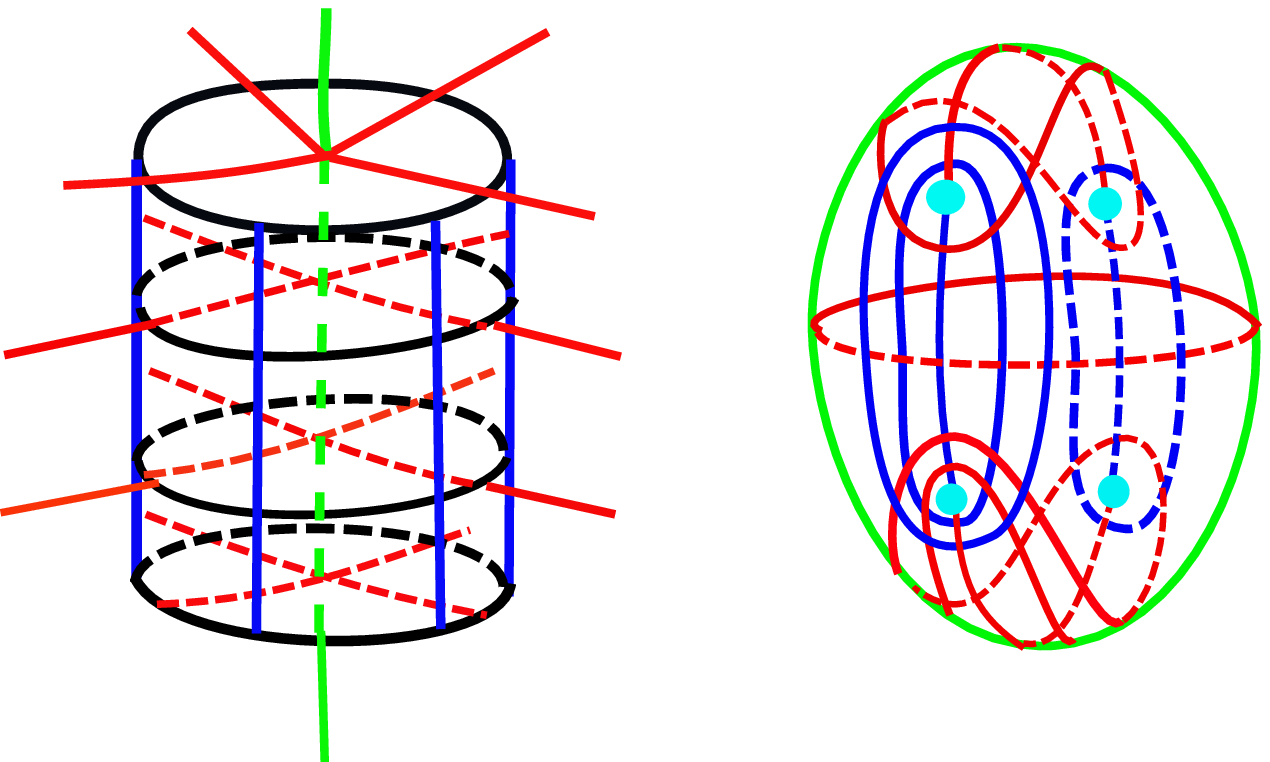}
    \caption{ 
    Behavior of the Principal Foliations  $\mathcal{F}_i  $ near the partially umbilic curves ${\mathcal P}_{23}^1, $   ${\mathcal P}_{23}^2 $ (light blue lines) and   ${\mathcal P}_{12}$ (dotted green line), top.
      Bottom, left:  Integral foliations
    by ellipsoids of plane distributions spanned by $\mathcal L_2$ and  $\mathcal L_1$ or $\mathcal L_3$,  which
    are  ellipsoids. Bottom, right: ellipsoid  with $3$ different axes  whose $4$ umbilics  slide along the  partially umbilic  closed lines  (horizontal, light  blue)  and whose equator contains  the other partially umbilic closed lines (vertical, green dotted print).
    Observe the book structure with binding along the green
circle, whose pages are these ellipsoids.
     }
  \label{fig:pumb3caso2}
    \end{center}
\end{figure}

\begin{proof} Consider the parametrization $\alpha: [0,2\pi]\times \{(v,w): v^2+w^2< 1\}\to \mathbb E_{a,b,c,c}$ defined by
$$\alpha(u,v,w)=( {a\cos u} \sqrt{1-v^2-w^2},  {b\sin u} \sqrt{1-v^2-w^2} ,cv,cw)$$
Consider a normal positive vector
$${N}(u,v,w)=-(\frac{c \cos u \sqrt{1 -v^2-w^2 }}{a}, \frac{c \sin u\sqrt{1 -v^2-w^2 }}{b}, v,w).$$
The curve $\alpha(u,0,0)$
is a partially umbilic ($k_1=k_2<k_3$). In fact,
the principal curvatures are given by:
$$\aligned k_1(u,0,0)=& k_2(u,0,0)=  \frac{ab}{\Delta^3}, \; k_3(u,0,0)= \frac{ab}{c^2\Delta},
\\
\Delta=&\sqrt{a^2\sin^2u+b^2\cos^2u}.\endaligned$$
Let 
$$\gamma_\pm(t)=(\pm a\sqrt{\frac{a^2-b^2}{a^2-c^2}},0,c\sqrt{\frac{b^2-c^2}{a^2-c^2}}\cos t, c\sqrt{\frac{b^2-c^2}{a^2-c^2}}\sin t).$$
Along $\gamma_\pm$ (green lines in Fig. \ref{fig:pumb3caso2}) the principal curvatures $k_1<k_2=k_3$ are given by:
$$k_1(t)= \frac{ab}{c^2\Delta}, \;k_2(t)=k_3(t)= \frac{a}{b\Delta}, \;\; \Delta=\frac{b}{c} \sqrt{b^2\sin^2t+c^2\cos^2t}.$$
 Next consider the parametrization $\beta:[0,2\pi ] \times  \{(u,v): \frac{u^2}{a^2}+\frac{v^2}{b^2} <1 \}\to \mathbb E_{a,b,c,c}$ defined by
 $$\beta(u,v,t)=(au,bv,c\cos t\sqrt{1- u^2-  v^2},c\sin t\sqrt{1- u^2-  v^2}).$$

 The  positive normal vector (oriented inward) ${N}_{\beta}=-\beta_u\wedge\beta_v\wedge \beta_t$ is given by:
 $$N_{\beta}=-( b c^2 u, ac^2v, abc\sqrt{1-u^2-v^2} \cos t, abc\sqrt{1-u^2-v^2} \sin t).$$
 In this parametrization the partially umbilic set is the union of two regular curves  $\beta(u_-,0,t)$, $\beta(u_+,0,t)$ where $u_{\pm}=\pm \sqrt{\frac{a^2-b^2}{a^2-c^2}}$ and $a>b>c>0$.

 The principal curvatures restricted to  these curves satisfy $k_1<k_2=k_3$ and are given by:
 $$ k_1(u_\pm,0,t)=\frac{ac}{b^3}, \; k_2(u_\pm,0,t)=k_3(u_\pm,0,t)=  \frac{a}{bc},\;\;.$$
 
The structure of the principal lines
follows from the rotational symmetry of
$\mathbb E_{a,b,c,c} $.
In fact,  consider  the pencil of hyperplanes $\pi_\theta$ given by $\cos\theta z+\sin\theta w=0$.
For all $\theta$,
$\pi_\theta \cap \mathbb E_{a,b,c,c}$ is a two dimensional ellipsoid that  contains  the ellipse $E_0$.
Moreover $\pi_\theta \cap \mathbb E_{a,b,c,c}$ is an orthogonal intersection and therefore  the principal lines of this two dimensional ellipsoid are principal lines of the three dimensional ellipsoid $\mathbb E_{a,b,c,c}$.

The third family of principal lines are the orthogonal trajectories to the
family of ellipsoids $\pi_\theta \cap \mathbb E_{a,b,c,c}$ and all are circles. It turns out that these circles are the leaves of ${\mathcal F}_2$.
\end{proof}

\subsection{One  pair of equal axes: $\mathbb E_{c,c,a,b}$}\label{ss:eccab}

\begin{proposition}\label{prop:eabcc2}
Consider the ellipsoid $\mathbb E_{c,c,a,b}$ defined by  $$\frac{x^2}{a^2}+\frac{y^2}{b^2}+\frac{z^2+w^2}{c^2}=1,\;\; c>a>b>0.$$

Let   $E_{0}=\{(x,y,0,0): \frac{x^2}{a^2}+\frac{y^2}{b^2}  =1\}.$

Then the umbilic set   is empty and the partially umbilic set is the union of three regular curves ${\mathcal P}_{23}\cup {\mathcal P}_{12}^1\cup {\mathcal P}_{12}^2$.

The partially umbilic curves  ${\mathcal P}_{12}^1$ and $ {\mathcal P}_{12}^2$  are circles contained in the hyperplanes
$ (0,\pm b\sqrt{\frac{a^2-b^2}{c^2-b^2}} ,z,w)$.
 The pairs of curves $\{ {\mathcal P}_{12}^1,\; {\mathcal P}_{23}\}$ and $\{ {\mathcal P}_{12}^2,\; {\mathcal P}_{23}\}$  are linked while
 the pair
 $\{ {\mathcal P}_{12}^1,\; {\mathcal P}_{12}^2\}$  is not linked.

Then $\mathbb E_{c,c,a,b} $ is foliated  by a pencil of two dimensional surfaces, bidimensional ellipsoids, all passing through the ellipse $E_0={\mathcal P}_{23}$.

 Each surface  
 is an integral leaf of the distribution by planes generated by \Lp 1 and \Lp 3, and there the restricted    principal configuration
is principally equivalent to that of
the bi-dimensional ellipsoid, with 3 different axes.
The principal foliation \Fp 2 is singular on $ {\mathcal P}_{12}^1\cup {\mathcal P}_{12}^2\cup {\mathcal P}_{23}$  and all its regular leaves are closed curves orthogonal to the pencil of surfaces
with book structure around ${\mathcal P}_{23}$.

The behavior of the principal foliations near
the partially umbilic curves are illustrated  in Fig. \ref{fig:pumb3caso3}.

\begin{figure}[h]
\begin{center}
 \def\svgwidth{0.7\textwidth}
    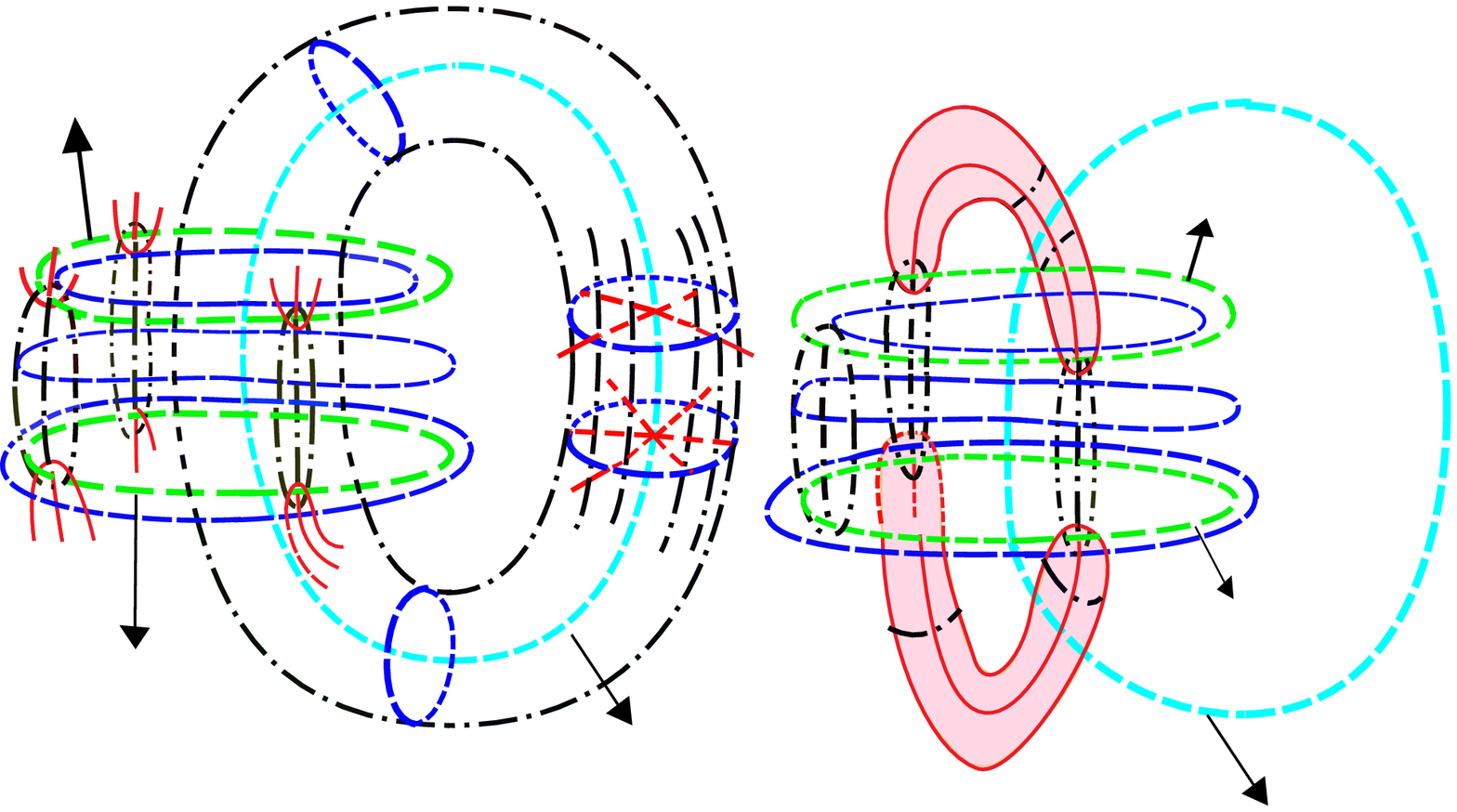
    \includegraphics[scale=0.40]{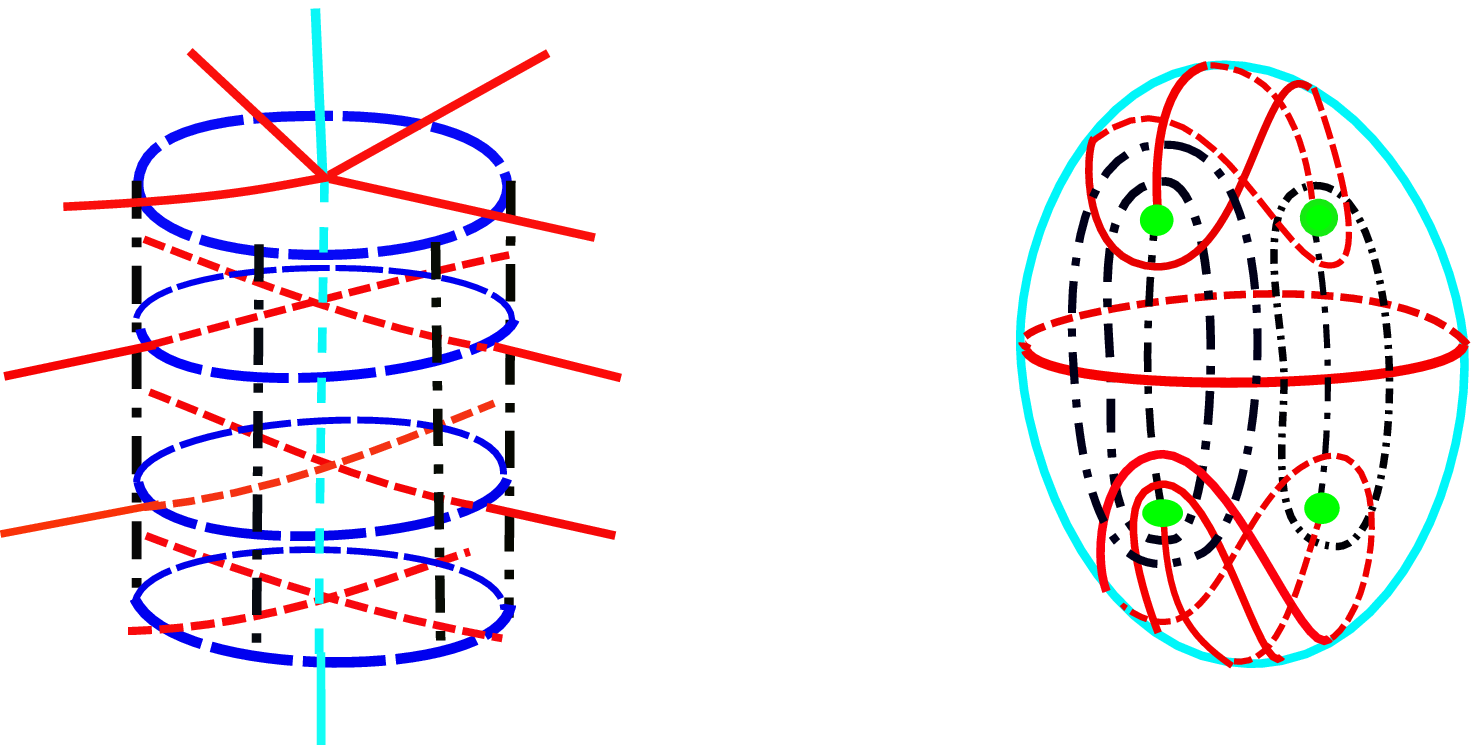}
    \caption{ Behavior of the Principal Foliations  $\mathcal{F}_i  $ near the partially umbilic curves ${\mathcal P}_{12}^1, $   ${\mathcal P}_{12}^2 $ (green lines) and   ${\mathcal P}_{23}$ (
    light blue line), top.  
    Bottom, left:  Integral foliations
    by ellipsoids of plane distributions spanned by $\mathcal L_2$ and  $\mathcal L_1$ or $\mathcal L_3$,  which
    are  ellipsoids.
    Bottom, right: ellipsoid  with $3$ different axes  whose $4$ umbilics  slide along the  partially umbilic  closed lines  (horizontal, green dotted print)  and whose equator contains  the other partially umbilic closed lines (vertical, light blue
    dotted).
    Observe the book structure with binding along the blue
circle, whose pages are these ellipsoids.
     }
  \label{fig:pumb3caso3}
    \end{center}
\end{figure}

\end{proposition}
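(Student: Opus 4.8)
The plan is to run, \emph{mutatis mutandis}, the argument of Proposition \ref{prop:eabcc1}: the defining equation is identical, so the same two parametrizations and the same rotational-symmetry reduction apply, the only structural change being that $c$ is now the \emph{largest} semi-axis rather than the smallest. This interchanges the roles of $\mathcal{L}_1$ and $\mathcal{L}_3$ (hence of $\mathcal{P}_{12}$ and $\mathcal{P}_{23}$) throughout, which is exactly what produces the different labelling in the statement. I would keep the two charts $\alpha(u,v,w)=(a\cos u\sqrt{1-v^2-w^2},\,b\sin u\sqrt{1-v^2-w^2},\,cv,\,cw)$, covering $\mathbb{E}_{c,c,a,b}$ away from $E_0$, and $\beta(u,v,t)=(au,\,bv,\,c\cos t\sqrt{1-u^2-v^2},\,c\sin t\sqrt{1-u^2-v^2})$, covering a neighbourhood of $E_0$, just as in that proof.

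Next I would exploit the symmetry. The group $SO(2)$ acting by rotations in the $(z,w)$-plane acts by isometries on $\mathbb{E}_{c,c,a,b}$ with fixed-point set exactly the ellipse $E_0$, and for each $\theta$ the hyperplane $\pi_\theta=\{\cos\theta\,z+\sin\theta\,w=0\}$ meets $\mathbb{E}_{c,c,a,b}$ orthogonally in a two-dimensional ellipsoid, isometric to $\{x^2/a^2+y^2/b^2+r^2/c^2=1\}$ and containing $E_0$. Since $c>a>b>0$ this page has three distinct axes, so by Monge's theorem (see the Introduction) its principal configuration has four umbilics and two regular principal foliations outside them. Orthogonality of the intersection forces the normal of $\mathbb{E}_{c,c,a,b}$ along a page to be the normal of that page inside $\pi_\theta$; together with the reflection symmetry across $\pi_\theta$ (which fixes the page pointwise and reverses the orbit direction), this shows that the shape operator of $\mathbb{E}_{c,c,a,b}$ preserves both the page's tangent $2$-plane and the orbit direction. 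Hence the pages are integral leaves of the plane distribution spanned by two principal line fields, the orbit direction is the third principal line field, its integral curves are the circles $\{(x,y,\rho\cos t,\rho\sin t)\}$, and the principal configuration restricted to a page is principally equivalent to that of the triaxial ellipsoid of $\mathbb{R}^3$. The one point that must be pinned down by an explicit comparison of principal curvatures — \textbf{this is the main obstacle} — is that the orbit direction is the \emph{intermediate} field $\mathcal{L}_2$ at every regular point, so that the circular orbits are the leaves of $\mathcal{F}_2$ and $\mathcal{F}_2$ is the one foliation singular on all three partially umbilic curves.

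Finally I would read off the partially umbilic set from the charts. Computing the principal curvatures along $\alpha(u,0,0)=E_0$ and along the two circles $\beta(u_\pm,0,t)$ with $u_\pm=\pm\sqrt{(a^2-b^2)/(c^2-b^2)}$ yields the coincidences $k_2=k_3$ on $E_0$ and $k_1=k_2$ on the two circles, so $\mathcal{P}_{23}=E_0$ and $\mathcal{P}_{12}^1,\mathcal{P}_{12}^2$ are the two circles, which are precisely the curves swept out by the four page-umbilics as $\theta$ runs over $\mathbb{S}^1$; the umbilic set is empty because the three explicit curvature values are never all equal. The linking statements are then elementary topology: $\mathcal{P}_{12}^1$ and $\mathcal{P}_{12}^2$ lie in the two disjoint parallel planes $\{x=0,\;y=\pm b\sqrt{(a^2-b^2)/(c^2-b^2)}\}$, each bounding a disc disjoint from the other, so they are unlinked; whereas each $\mathcal{P}_{12}^i$ meets the plane $\{z=w=0\}$ in the single point $(0,\pm b\sqrt{(a^2-b^2)/(c^2-b^2)},0,0)$, which lies strictly inside the disc that $E_0=\mathcal{P}_{23}$ bounds there because $(a^2-b^2)/(c^2-b^2)<1$ (as $a<c$), giving linking number $\pm1$ and, locally, a Hopf band around $\mathcal{P}_{23}\cup\mathcal{P}_{12}^i$ — which is the book structure with binding $\mathcal{P}_{23}$ and ellipsoidal pages $\pi_\theta\cap\mathbb{E}_{c,c,a,b}$ asserted in the statement. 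The qualitative behaviour of $\mathcal{F}_1,\mathcal{F}_2,\mathcal{F}_3$ near each partially umbilic curve is inherited from the $\mathbb{R}^3$ picture near an umbilic of a page (for $\mathcal{P}_{12}^i$) and near the equatorial curve $E_0$ (for $\mathcal{P}_{23}$), exactly as illustrated in Figure \ref{fig:pumb3caso3}. Everything except the index bookkeeping flagged above is either inherited verbatim from Proposition \ref{prop:eabcc1} or is routine.
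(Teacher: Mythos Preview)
Your approach is essentially the paper's: the same two charts $\alpha$ and $\beta$, the same rotational-symmetry reduction to the pencil of hyperplane sections $\pi_\theta\cap\mathbb{E}_{c,c,a,b}$, and the same reading-off of the partially umbilic curves from explicit curvature computations. The paper's own proof is in fact terser than yours --- it simply records the curvatures along $E_0$ and along the two circles and then refers back to Proposition~\ref{prop:eabcc1} for the structure of the principal lines --- so your write-up supplies more detail than is strictly needed.

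Two small slips should be fixed. First, in the $\beta$-chart the partially umbilic circles are $\beta(0,v_\pm,t)$ with $v_\pm=\pm\sqrt{(a^2-b^2)/(c^2-b^2)}$, not $\beta(u_\pm,0,t)$: since $\beta(u,v,t)=(au,bv,\ldots)$ and the circles sit in the planes $x=0,\ y=\pm y_0$, it is the first coordinate that vanishes. You state the planes correctly a few lines later, so this is only a bookkeeping inconsistency. Second, your linking argument is not right as written: the circle $\mathcal{P}_{12}^i$ has $z^2+w^2=c^2(c^2-a^2)/(c^2-b^2)>0$ everywhere, so it never meets the $2$-plane $\{z=w=0\}$; and in any case that disc lies in the interior of the solid ellipsoid, not in the $3$-sphere $\mathbb{E}_{c,c,a,b}$, so it cannot serve as a Seifert surface there. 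A correct argument uses a half-page of your own book structure as a Seifert disc for $E_0$: the four umbilics of the page-ellipsoid (axes $c>a>b$) sit at $(0,\pm y_0,\pm r_0)$, so each half-page $r>0$ or $r<0$ meets each $\mathcal{P}_{12}^i$ in exactly one point, giving linking number $\pm1$. For the unlinking of $\mathcal{P}_{12}^1$ and $\mathcal{P}_{12}^2$, note that the $2$-sphere $\{y=0\}\cap\mathbb{E}_{c,c,a,b}$ separates the ellipsoid into two $3$-balls, one containing each circle. (The paper itself does not spell out the linking in this proof, so this is extra work on your part; just make sure the argument you give actually takes place inside the $3$-manifold.)
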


\begin{proof} Consider the parametrization $\alpha: [0,2\pi]\times \{(v,w): v^2+w^2< 1\}\to \mathbb E_{c,c,a,b}$ defined by

$$\alpha(u,v,w)=( a\cos u \sqrt{1 -v^2-w^2},  b\sin u \sqrt{1 -v^2-w^2} , cv, cw)$$

A positive non unitary normal vector is given by:

$${N}(u,v,w)=-(\frac{c \cos u \sqrt{1 -v^2-w^2 }}{a}, \frac{c \sin u\sqrt{1 -v^2-w^2 }}{b}, v,w).$$

In this parametrization the curve
$\alpha(u,0,0)=(a\cos u, b\sin u,0,0)$
is a partially umbilic line ($k_1<k_2=k_3 $) and  the principal curvatures are given by:

$$\aligned k_1(u,0,0) =& \frac{ab}{c^2 \Delta}, \;\;k_2(u,0,0)=
k_3(u,0,0)= \frac{ab}{\Delta^3}\\
\Delta=&\sqrt{a^2\sin^2u+b^2\cos^2u}.\endaligned$$

The set defined by $cos(u)=0$ and $v^2+w^2=\frac{c^2(c^2-a^2)}{c^2-b^2}$ is a partially umbilic line.

Let $$\gamma_\pm(\theta)=(0, \pm b\sqrt{\frac{a^2-b^2}{c^2-b^2}}, c\sqrt{\frac{c^2-a^2}{c^2-b^2}}\cos \theta, c\sqrt{\frac{c^2-a^2}{c^2-b^2}}\sin \theta).$$

Along $\gamma_\pm$ the principal curvatures $k_1=k_2<k_3$ are given by:

$$k_1(\theta)= \frac{ab}{c^2\Delta}, \, k_2(\theta)=k_3(\theta)= \frac{a}{b\Delta}, \;\; \Delta=\frac{b}{c} \sqrt{b^2\sin^2\theta+c^2\cos^2\theta}.$$


Consider the parametrization $\beta:[0,2\pi ] \times  \{(u,v): u^2+v^2 <1 \}\to \mathbb E_{c,c,a,b}$ defined by

 $$\beta(u,v,t)=(au,bv,c\cos t \sqrt{1- u^2-  v^2},c\sin t \sqrt{1- u^2-  v^2}).$$

 In this parametrization the partially umbilic set is the union of two regular curves  $\beta(0,v_-,t)$, $\beta(0,v_+,t)$ where $v_{\pm}=\pm \sqrt{\frac{a^2-b^2}{c^2-b^2}}$.

Since the umbilic set is empty, the structure of the principal lines can be explained as in the proof of Proposition \ref{prop:eabcc1}.
\end{proof}

%
%

\subsection{One  pair of equal axes: $\mathbb E_{a,c,c,b}$}\label{ss:eaccb}

\begin{theorem}\label{th:eabcc3}
Consider the ellipsoid $\mathbb E_{a,c,c,b}$ defined by
  $$
\frac{x^2}{a^2}+\frac{y^2}{b^2}+\frac{z^2+w^2}{c^2}=1,\;\; a>c>b>0.$$
 Let   $E_{0}=\{(x,y,0,0): \frac{x^2}{a^2}+\frac{y^2}{b^2}  =1\}.$
Then it follows that:

\noindent i) The umbilic set is contained in the
ellipse $E_0$  and  consists of four points $(a\cos u_\pm, b\sin u_\pm ,0,0)$, $\cos u\pm= \sqrt{\frac{a^2-c^2}{a^2-b^2}}$;  the partially umbilic set consists   of the  four complementary  arcs contained in $E_0$.

\noindent ii)  The ellipsoid   $\mathbb E_{a,c,c,b} $ is foliated  by a pencil of two dimensional surfaces, bidimensional ellipsoids, all passing through the ellipse $E_0$.
Each surface 
 is an integral leaf of the distribution by planes generated by \Lp 1 and \Lp 3, and there the restricted    principal configuration
is principally equivalent to that of
the bi-dimensional ellipsoid, with 3 different axes.

\noindent iii)  
The principal foliation \Fp 2 is singular on $E_0$ and all its regular leaves are circles orthogonal to the pencil of surfaces
with book structure around $E_0$.
The behavior of the principal foliations near
the partially umbilic curves and umbilic points are illustrated  in Fig. \ref{fig:pumb3b}
and Fig. \ref{fig:pumb3bg}.
\end{theorem}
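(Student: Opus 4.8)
The plan is to follow the strategy of Propositions~\ref{prop:eabcc1} and \ref{prop:eabcc2}, exploiting the $SO(2)$ symmetry of $\mathbb E_{a,c,c,b}$ that rotates the $(z,w)$--plane; the only genuinely new feature is the \emph{coalescence} of the four umbilic points of each section onto the fixed ellipse $E_0$. Consider the pencil of hyperplanes $\pi_\theta=\{\cos\theta\, z+\sin\theta\, w=0\}$, $\theta\in\mathbb S^1$. Writing $\pi_\theta$ in coordinates $(x,y,\xi)$, with $\xi$ running along the line $t\mapsto t(-\sin\theta,\cos\theta)$ of the $(z,w)$--plane, the section $\Sigma_\theta:=\pi_\theta\cap\mathbb E_{a,c,c,b}$ is the two--dimensional ellipsoid $x^2/a^2+\xi^2/c^2+y^2/b^2=1$; since $a>c>b$ it has three distinct semi--axes and thus carries Monge's principal configuration with four $D_1$ umbilic points. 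The reflection across $\pi_\theta$ is an isometry of $\mathbb R^4$ preserving $\mathbb E_{a,c,c,b}$ and fixing $\Sigma_\theta$ pointwise, so the shape operator of $\mathbb E_{a,c,c,b}$ along $\Sigma_\theta$ commutes with it, hence preserves both $T\Sigma_\theta$ and the line orthogonal to $\pi_\theta$ inside $T\mathbb E_{a,c,c,b}$. This yields at once that each $\Sigma_\theta$ is an integral leaf of the plane distribution spanned by two principal line fields, that the restricted configuration is that of the triaxial bidimensional ellipsoid, and that the remaining principal line field is tangent to the $SO(2)$--orbits; this is part~(ii).

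For part~(i), the decisive point is that the four umbilics of $\Sigma_\theta$, which on a triaxial ellipsoid lie in the plane of its greatest and least axes --- here the $(x,y)$--plane $\{\xi=0\}$ --- are \emph{the same four points for every} $\theta$, namely the points of $E_0$ satisfying $x^2/a^4+y^2/b^4=c^2/(a^2b^2)$, which are $(a\cos u_\pm,b\sin u_\pm,0,0)$ with $\cos^2 u_\pm=(a^2-c^2)/(a^2-b^2)$. Hence these four points are umbilic for the whole $\mathbb E_{a,c,c,b}$. Conversely, if a point $p$ lies off $E_0$ it is in the interior of the page $\Sigma_{\theta(p)}$, whose two principal curvatures are two of the three principal curvatures of $\mathbb E_{a,c,c,b}$ at $p$; so if $p$ were umbilic it would be a page--umbilic, hence on $E_0$, a contradiction. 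Along $E_0$ the third ($SO(2)$--orbit) curvature, which by symmetry equals the normal curvature of $\mathbb E_{a,c,c,b}$ in every $(z,w)$--direction, coincides with one of the two page curvatures --- this is exactly why $E_0$ is partially umbilic --- and equals the common umbilic value only at the four points above. Therefore the umbilic set consists of those four points and the partially umbilic set is the union of the four arcs of $E_0$ they cut out.

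Part~(iii) is then bookkeeping: the map $p\mapsto\theta(p)$ is a smooth fibration $\mathbb E_{a,c,c,b}\setminus E_0\to\mathbb S^1$ with pages $\Sigma_\theta$ and binding $\bigcap_\theta\overline{\Sigma_\theta}=E_0$, i.e.\ a book structure around $E_0$; the orthogonal trajectories of this pencil are the $SO(2)$--orbits $\psi\mapsto(x_0,y_0,R_\psi(z_0,w_0))$, which are circles, shrink to points exactly on $E_0$, and --- on the regular part --- carry the \emph{middle} principal curvature, so they are the leaves of $\mathcal F_2$, which is thus singular precisely on $E_0$. To make the arguments near $E_0$ and near the degenerate section $\{x=y=0\}\cap\mathbb E_{a,c,c,b}$ rigorous I would use, as in the previous propositions, the parametrizations $\alpha(u,v,w)=(a\cos u\sqrt{1-v^2-w^2},\,b\sin u\sqrt{1-v^2-w^2},\,cv,\,cw)$ and $\beta(u,v,t)=(au,\,bv,\,c\cos t\sqrt{1-u^2-v^2},\,c\sin t\sqrt{1-u^2-v^2})$, reading off the explicit ordered curvatures $k_1\le k_2\le k_3$; these also supply the one point that is not purely synthetic, namely that off $E_0$ the three curvatures are pairwise distinct and $k_2$ is the orbit curvature.

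The real obstacle, I expect, is not parts (i)--(iii) as stated but the local description near the four umbilic points underlying Figures~\ref{fig:pumb3b} and \ref{fig:pumb3bg}: an umbilic of a hypersurface of $\mathbb R^4$ is a simultaneous singularity of the three principal line fields and of the two partially umbilic plane distributions, and one must show that each page through such a point reproduces the Darbouxian $D_1$ Monge pattern while the orthogonal family $\mathcal F_2$ collapses onto the point, and that these local pictures glue consistently with no extra degeneracy --- equivalently, that at each of these points a $\mathcal P_{12}$ arc and a $\mathcal P_{23}$ arc of $E_0$ meet within the partially umbilic stratum. Everything else is the lengthy but routine fundamental--form computation already exemplified in Propositions~\ref{prop:eabcc1} and \ref{prop:eabcc2}.
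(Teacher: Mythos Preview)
Your approach is correct and more geometric than the paper's, which proceeds almost entirely by direct calculation. In the chart $\alpha$ the paper writes out both fundamental forms, sets up the cubic $p(k)=\det(b_{ij}-kg_{ij})$, and computes its discriminant (the resultant of $p$ and $p'$). After the substitutions $a^2=c^2+s$, $c^2=b^2+t$ and polar coordinates $v=R\cos\gamma$, $w=R\sin\gamma$, vanishing of the discriminant becomes a biquadratic $PU(R,u)=0$ in $R$ which, for $s,t>0$, has no real root with $R>0$; hence multiple eigenvalues occur only on $v=w=0$, i.e.\ on $E_0$. The explicit curvatures along $E_0$ then yield the split into the $\mathcal P_{12}$ and $\mathcal P_{23}$ arcs and the four umbilic points where $\cos^2 u=(a^2-c^2)/(a^2-b^2)$. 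A second chart $\beta$ rechecks the absence of partially umbilic points by observing that the relevant equation is a positive definite quadratic form in $(s,t)$ on the disk $u^2+v^2<1$.

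Your $SO(2)$/reflection argument replaces most of this and explains \emph{why} the pages are principal surfaces and why their four Monge umbilics coincide for all $\theta$ and sit on $E_0$. Two calibration remarks, though. First, the ``one point that is not purely synthetic'' you defer --- that off $E_0$ the orbit curvature is strictly between the two page curvatures, so no partial umbilics occur there and $k_2$ really is the orbit curvature --- is exactly the technical heart of the paper's proof, not a side check; your symmetry argument rules out only the coincidence of the two page curvatures off $E_0$, not the coincidence of the orbit curvature with one of them, and the discriminant computation is how the paper closes that gap. Second, the local analysis near the four umbilic points that you flag as the ``real obstacle'' is not carried out in the paper either: its proof stops once the singular set on $E_0$ is identified, and the transversal pictures of Figs.~\ref{fig:pumb3b}--\ref{fig:pumb3bg} are read off from precisely the page/orbit decomposition you have already set up.
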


\begin{figure}[h]
\begin{center}
\def\svgwidth{0.70\textwidth}
    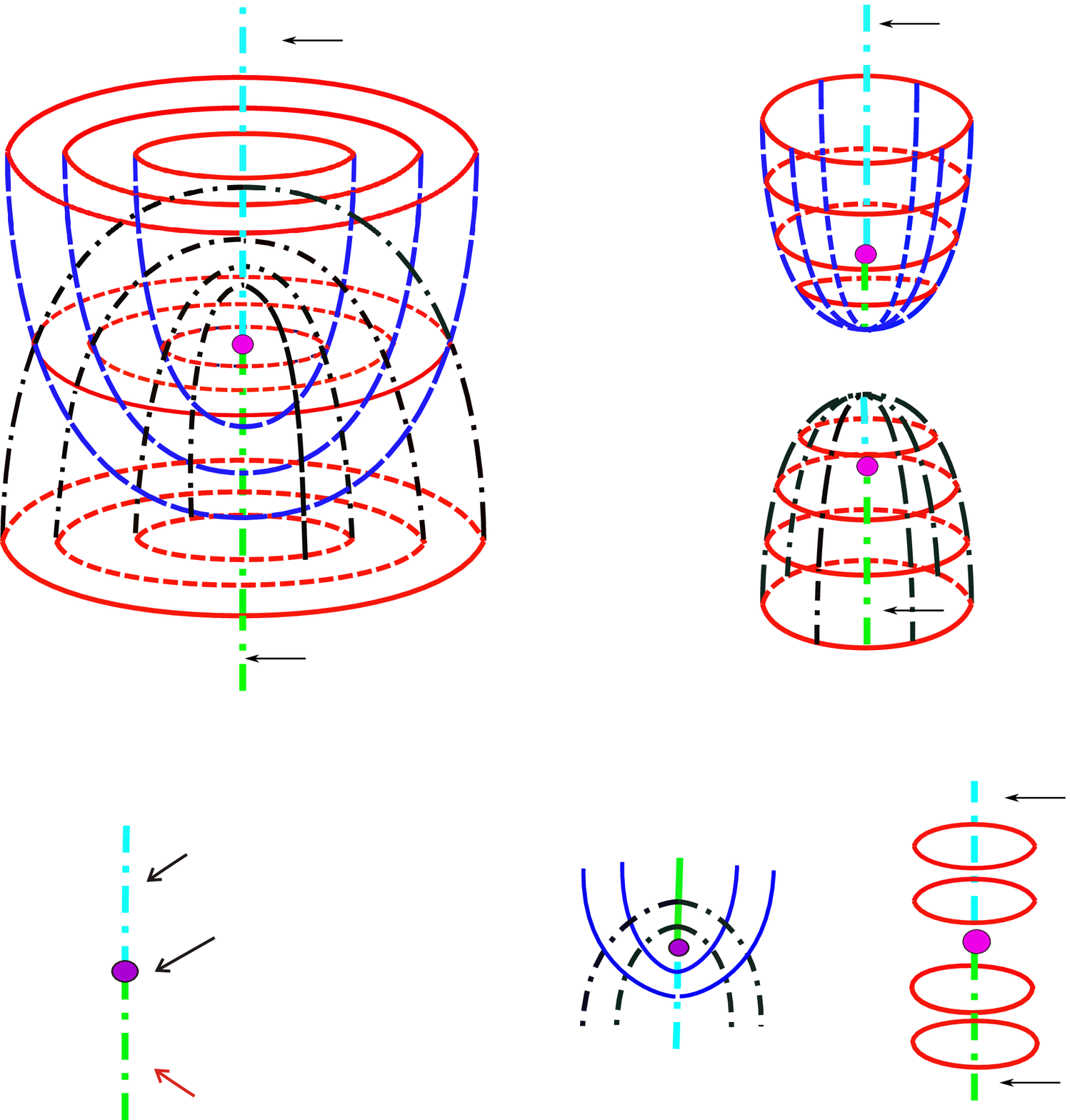
    \caption{Behavior of the  three principal foliations  $\mathcal{F}_i  $ near the partially umbilic 
    and umbilic points located on the ellipse $E_0$. 
   }
 \label{fig:pumb3b}
    \end{center}
\end{figure}

\begin{figure}[h]
\begin{center}
\def\svgwidth{0.6\textwidth}
    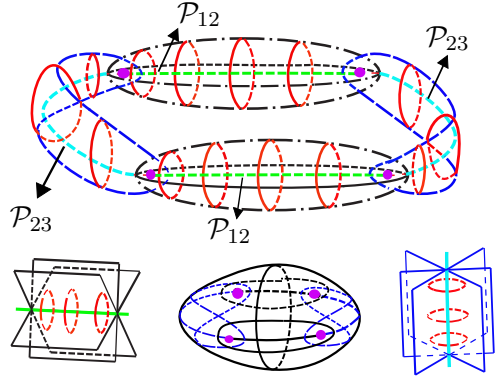
    \caption{Global behavior of the   principal foliations  $\mathcal{F}_i.$ The foliation ${\mathcal F}_2 $ is singular along the ellipse $E_0$ and all its leaves are circles. 
    Observe the book structure with binding along the ellipse $E_0$,  the  pages are two dimensional ellipsoids with three distinct axes. The ellipse   contains four umbilic points and four arcs of partially umbilic points.
    The plane   distributions spanned by ${\mathcal L}_2$ and ${\mathcal L}_1$ or ${\mathcal L}_3$   are integrable. }
  \label{fig:pumb3bg}
    \end{center}
\end{figure}

 \begin{proof} 
 Consider the parametrization $\alpha: [0,2\pi]\times \{(v,w): v^2+w^2< 1\}\to \mathbb E_{a,c,c,b}$ defined by
$$\alpha(u,v,w)=( a\cos u \sqrt{1-v^2-w^2},  b\sin u \sqrt{1-v^2-w^2} ,cv, cw).$$
In this chart the first fundamental form is given by:
$$\aligned g_{11}=&   (1-v^2-w^2)(b^2\cos^2u+a^2\sin^2u),\\
 g_{12}=&  (a^2-b^2)v \sin u\cos u,\;
  g_{13}=   (a^2-b^2)w \sin u\cos u\\
 g_{22}=&c^2+\frac{v^2(a^2\cos^2u+b^2\sin^2u)}{(1-v^2-w^2)},\;\;
 g_{23}= \frac{vw(a^2\cos^2u+b^2\sin^2u)}{ (1-v^2-w^2)}\\
 g_{33}=& c^2+\frac{w^2(a^2\cos^2u+b^2\sin^2u)}{(1-v^2-w^2)}\endaligned$$
 With respect
 to the positive normal vector (oriented inward) 
$${N}(u,v,w)=-(\frac{c \cos u \sqrt{1 -v^2-w^2 }}{a}, \frac{c \sin u\sqrt{1 -v^2-w^2 }}{b}, v,w).$$
 the second fundamental form $b_{ij}=\langle\alpha_{ij},{N}\rangle$ ($\alpha_{11}=\alpha_{uu},\; $  $\alpha_{12}=\alpha_{uv},\; $ etc.) is given by
 $$\aligned b_{11}=&c (1-v^2-w^2),\;\; b_{12}=0,\;\;\; b_{13}=0\\
 b_{22}=&  \frac{c(1-w^2)}{c^2-v^2-w^2},\; \; b_{23}= c\frac{vw}{1-v^2-w^2}\\
 b_{33}=&   \frac{c(1-v^2)}{c^2-v^2-w^2}\endaligned$$
 Let $p(k)=\det(b_{ij}-kg_{ij})$. Since $a>c>b$, write $a^2=c^2+s, \; c^2=b^2+t$ with $s>0$ and $t>0$.

 Also let $v=R\cos \gamma $ and $w=R\sin \gamma$.

 The polynomial $p(k)$ has double roots on $v=0,\;w=0$ and
 also
 when $\text{resultant}(p(k),p^\prime(k),k)=0$.

 Recall that the {\em resultant}  of a cubic polynomial $P(k)=Ak^3+Bk^2+Ck+1$ and
its derivative $P^\prime(k)=3Ak^2+2Bk+C$,
called the   {\em discriminant} of $P(k)$,
is given by  (see\cite{BP}):
$$27 A^2-18 A B C-B^2 C^2+4 B^3+4 A C^3.$$
 Algebraic manipulation shows that the zeros of the discriminant of $p(k)$ is   equivalent to the equation:

 \begin{equation} \aligned PU(R,u)=&  (s\cos^2 u-t\sin^2 u)^2 R^4+2[  (s+t )^2\cos^2 u\sin^2 u+  st ]R^2\\
 +& (t\cos^2 u-s\sin^2 u)^2=0.\endaligned
 \end{equation}

 Under  the hypothesis $(a>c>b \Leftrightarrow s>0, t>0)$ the equation $PU(R,u)=0$ has   real roots only when $R=0$ and so $p(k)$ has double or triple roots only on $v=0, w=0$.

Thus the  partially umbilic set is contained in  $\{v=0,w=0\}$ and there  the principal curvatures are given by:
$$\aligned k_1(u,0,0)=& k_2(u,0,0)=\frac{ab}{c^2\Delta} <
k_3(u,0,0)=\frac{ab}{\Delta^3}, \;\text{if} \;\; \cos^2u <\frac{a^2-c^2}{a^2-b^2} \\
k_1(u,0,0)=& \frac{ab}{\Delta^3} <
 k_2(u,0,0)=k_3(u,0,0)= \frac{ab}{c^2\Delta}, \;\text{if}\;\; \cos^2u >\frac{a^2-c^2}{a^2-b^2}  \\
\Delta=&\sqrt{a^2\sin^2u+b^2\cos^2u}.\endaligned$$

It follows that $k_1=k_2=k_3(u,0,0)$ when $\cos^2u=\frac{a^2-c^2}{a^2-b^2}=\frac{s}{s+t}$.

Next consider the parametrization $\beta: [0,2\pi]\times \{(u,v): u^2 +v^2 < 1\}\to \mathbb E_{a,c,c,b}$ defined by
$$\beta(u,v,\theta)=(au,bv,c\cos \theta\sqrt{1-u^2 -v^2 },c\sin \theta\sqrt{1-u^2 -v^2 }).$$
Similar analysis gives that the umbilic and partially umbilic points are defined by the equation

\begin{equation}
\aligned
PU(u,v,\theta)=&(su^2-tv^2)^2-2(s+t)(su^2+tv^2)+(s+t)^2=0, \; u^2+v^2<1.\\
=& (1-v^2)^2 s^2+(1-u^2)^2 t^2+2 (1-u^2 v^2-v^2-u^2)st=0\endaligned
\end{equation}

It follows that
this  equation above does not have real solutions.  In fact, in the region $ u^2+v^2<1$ the above equation, seen as a    quadratic form  in the variables $(s,t)$, is positive definite. \end{proof}

\subsection{Four distinct  axes: $\mathbb E_{a,b,c,d}$}\label{ss:eabcd}

\vskip .3cm

\noindent In this section will be established  the global behavior of principal lines in the ellipsoid of four different axes.

\begin{lemma}\label{lem:cartaprincipal}
The ellipsoid $Q_0=\mathbb E_{a,b,c,d}$  given by
$$ Q(x,y,z,w)=\dfrac{x^2}{a^2 } +\dfrac{ y^2}{b^2 } +\dfrac{
z^2}{c^2 } + \dfrac{w^2}{d^2 } - 1=0, \;\;\; a>b>c>d>0,$$ has
sixteen  principal charts $(u,v,t)=\varphi_i(x,y,z,w)$, where
$$\varphi_i^{-1}:  (d^2,c^2)  \times (c^2,b^2) \times (b^2,a^2)
\{(x,y,z,w): xwyz\ne 0\}\cap Q^{-1}(0) $$
 is defined by equation
\eqref{eq:pchart1}.

\begin{equation}\label{eq:pchart1} \aligned x^2 &= \dfrac{a^2(a^2 - u)(a^2 - v)(a^2 - t )}{(a^2 - b^2)(a^2 - c^2)(a^2 -
d^2)},
\quad y^2 = \dfrac{b^2(b^2 - u)(b^2 - v)(b^2 - t )}{(b^2 - a^2)(b^2 - c^2)(b^2 - d^2)}\\
z^2 &=\dfrac{ c^2(c^2 - u)(c^2 - v)(c^2 -t )}{(c^2 - a^2)(c^2 - b^2)(c^2 - d^2)},\quad
  w^2 = \dfrac{d^2(d^2 - u)(d^2 - v)(d^2 - t )}{(d^2 - a^2)(d^2 - b^2)(d^2 - c^2)}.
\endaligned \end{equation}

For all $p\in \{(x,y,z,w): xyzw \ne
0\}\cap Q^{-1}(0)$ and $Q_0=Q^{-1}(0)$ positively oriented, the principal curvatures satisfy
$0<k_1(p)<k_2(p)<k_3(p)$.

\end{lemma}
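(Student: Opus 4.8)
The plan is to use the classical theory of confocal quadrics — equivalently, elliptic (Jacobi) coordinates in $\mathbb R^4$ — to produce the sixteen principal charts, and then to read off the sign and ordering of the principal curvatures directly from the diagonalized first and second fundamental forms in those charts. First I would introduce the one-parameter family of confocal quadrics $Q_\lambda$ associated with $Q_0 = \mathbb E_{a,b,c,d}$, namely the level sets of
$$
F_\lambda(x,y,z,w) \;=\; \frac{x^2}{a^2-\lambda}+\frac{y^2}{b^2-\lambda}+\frac{z^2}{c^2-\lambda}+\frac{w^2}{d^2-\lambda}-1,
$$
and recall the standard fact that through a generic point $p$ with $xyzw\neq 0$ there pass exactly three further confocal quadrics, one with parameter in each of the intervals $(d^2,c^2)$, $(c^2,b^2)$, $(b^2,a^2)$; these three parameters $(u,v,t)$ are the elliptic coordinates of $p$. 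Solving the resulting linear system for $x^2,y^2,z^2,w^2$ by the usual partial-fractions/Lagrange-interpolation argument yields exactly the formulas \eqref{eq:pchart1}; the $2^4 = 16$ choices of signs of $x,y,z,w$ give the sixteen charts $\varphi_i$, each a diffeomorphism onto an octant-type piece of $Q_0$ minus the coordinate hyperplanes.

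Next I would invoke the Dupin/Ivory theorem (the higher-dimensional analogue of the classical one, valid for any triply — here, quadruply — orthogonal system): the coordinate surfaces $u=\mathrm{const}$, $v=\mathrm{const}$, $t=\mathrm{const}$ meet orthogonally, so in the chart $\varphi_i$ the first fundamental form $G=(g_{ij})$ of $Q_0$ is diagonal, and moreover the coordinate curves are lines of curvature; hence the second fundamental form $(b_{ij})$ is diagonal as well. Thus the principal curvatures are simply $k_i = b_{ii}/g_{ii}$. Computing $g_{ii}$ from \eqref{eq:pchart1} by logarithmic differentiation gives, up to a common positive factor,
$$
g_{uu} \;=\; \frac{(u-v)(u-t)}{4\,(a^2-u)(b^2-u)(c^2-u)(d^2-u)},
$$
and cyclically for $g_{vv},g_{vv}$ with the roles of $u,v,t$ permuted; since $d^2<u<c^2<v<b^2<t<a^2$, each denominator has a definite sign and one checks $g_{uu},g_{vv},g_{tt}>0$, confirming regularity of the chart. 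A parallel computation, using $N=-\nabla Q/|\nabla Q|$ and $\alpha_{ii}$, expresses $b_{ii}$ and shows $b_{ii}>0$ for all $i$ with the inner normal, so $k_i>0$; finally, forming the ratios $k_i=b_{ii}/g_{ii}$ and simplifying, one finds that $k_1,k_2,k_3$ are, in the appropriate order, monotone functions of the single elliptic coordinates $t,v,u$ respectively, whence the strict inequalities $0<k_1(p)<k_2(p)<k_3(p)$ follow from $u<v<t$ together with the sign of the relevant derivative.

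The routine part is the bookkeeping of signs in the partial-fraction identities and in $g_{ii}$, $b_{ii}$; the step I expect to be the real crux is establishing the strict ordering $k_1<k_2<k_3$ rather than merely $k_1\le k_2\le k_3$. One must show that the three ratios $b_{ii}/g_{ii}$, which a priori are comparable only through their common structure, are in fact pairwise distinct at every point with $xyzw\neq0$; the clean way to do this is to exhibit each $k_i$ as an explicit rational function of $(u,v,t)$ of the form (constant)$\times$(elementary symmetric combination), observe that $k_i$ depends on its "own" elliptic coordinate in a strictly monotone way while the dependence on the others cancels, and then use $d^2<u<c^2<v<b^2<t<a^2$ to separate the values. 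Equality could only occur on the boundary of the elliptic-coordinate box, i.e. on the coordinate hyperplanes $xyzw=0$, which are excluded; this is precisely where the partially umbilic and umbilic loci studied in the preceding propositions live, so the strictness here is exactly the complement of those degenerations.
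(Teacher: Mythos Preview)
Your approach is essentially the same as the paper's: confocal quadrics, elliptic coordinates, diagonalized fundamental forms in those coordinates, then read off the principal curvatures.

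The one place where you diverge is your assessment of ``the real crux.'' You expect the strict ordering $k_1<k_2<k_3$ to require a monotonicity argument, with each $k_i$ depending on its own elliptic coordinate while the others cancel. In fact the computation is much cleaner than that: once you carry out the ratios $b_{ii}/g_{ii}$ explicitly, the principal curvatures come out as
\[
\frac{abcd}{u\sqrt{uvt}},\qquad \frac{abcd}{v\sqrt{uvt}},\qquad \frac{abcd}{t\sqrt{uvt}},
\]
i.e.\ a common positive factor times $1/u$, $1/v$, $1/t$. Since $u\in(d^2,c^2)$, $v\in(c^2,b^2)$, $t\in(b^2,a^2)$ lie in disjoint open intervals, one has $u<v<t$ strictly at every point of the open box, and the ordering is immediate; equality is forced to the boundary $u=v=c^2$ or $v=t=b^2$, which lies on the coordinate hyperplanes you have already excluded. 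So the step you flagged as delicate is in fact the easiest one, provided you push the computation of $b_{ii}$ through to closed form rather than stopping at qualitative sign checks.
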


\begin{proof} It will be shown that  the ellipsoid $Q_0=\mathbb E_{a,b,c,d}$ belongs to a quadruply orthogonal family of quadrics.
For $p = (x,y,z,w) \in \mathbb R^4$ and  $ \lambda \in  \mathbb R$,  let
$$ Q(p,\lambda) =  \dfrac{x^2}{(a^2  -\lambda)} +\dfrac{ y^2}{(b^2 -\lambda)} +\dfrac{
z^2}{(c^2  -\lambda)} + \dfrac{w^2}{(d^2  -\lambda)}.  $$
For   $p = (x,y,z,w) \in Q_0\cap \{(x,y,z,w): xyzw\ne 0\} $  let $ u, v $  and
 $ t $ be the solutions of the quartic equation in $\lambda$,
 $Q (p,\lambda ) - 1 = 0 $
with $ u \in ( d^2 , c^2)$,  $ v \in ( c^2 , b^2)$ and
$t\in ( b^2 , a^2).\;$ 
 So 
the map $\varphi: Q_0\cap \{(x,y,z,w): xyzw\ne 0\} \to  (d^2,c^2)  \times
(c^2,b^2) \times (b^2,a^2)$ 
is well defined.

  By definition of $(u,v,t)$ it follows that:

\begin{equation}\label{eq:eqo4}
\aligned 
Q(p,\lambda) - 1 =& \dfrac{-\lambda (\lambda - u )(\lambda -
 v )(\lambda - t )}{\xi (\lambda )}   \\
  \xi (\lambda )=& ( a^2  -
 \lambda ) ( b^2 - \lambda )( c^2 - \lambda )( d^2  - \lambda
 ) \endaligned \end{equation}

Differentiating  equation \eqref{eq:eqo4} with respect  to  $\lambda$   and evaluating
 at  $ \lambda = 0 $, $\lambda = u, $ $\lambda=v$ and $\lambda=t$,
  which are the four simple roots of  the  equation, 
it follows that:
$$\aligned  \dfrac{uvt}{a^2b^2c^2d^2} &= \dfrac{x^2}{ a^4} +
\dfrac{y^2}{ b^4} + \dfrac{z^2}{ c^4} +
\dfrac{w^2}{ d^4}  \quad \text{and}\\
 \dfrac{- u (u - v ) (u - t )}{\xi (u)}  &= \dfrac{x^2}{(a^2 - u)^2 } +
\dfrac{y^2}{  (b^2 - u)^2} + \dfrac{z^2}{ (c^2 - u) ^2 } +
\dfrac{w^2}{ (d^2- u)^2 }\\
 \dfrac{- v (v - u ) (v - t )}{\xi (v)}  &= \dfrac{x^2}{(a^2 - v)^2 } +
\dfrac{y^2}{  (b^2 - v)^2} + \dfrac{z^2}{ (c^2 - v) ^2 } +
\dfrac{w^2}{ (d^2- v)^2 }\\
 \dfrac{- t (t - u ) (t - v )}{\xi (t)}  &= \dfrac{x^2}{(a^2 - t)^2 } +
\dfrac{y^2}{  (b^2 - t)^2} + \dfrac{z^2}{ (c^2 - t) ^2 } +
\dfrac{w^2}{ (d^2- t)^2 }.
\endaligned $$
 
The solution of the linear system above in the variables $x^2$, $y^2$, $z^2$ and $w^2$ is given by:
\begin{equation}\label{eq:pchart} \aligned x^2 &= \dfrac{a^2(a^2 - u)(a^2 - v)(a^2 - t )}{(a^2 - b^2)(a^2 - c^2)(a^2 -
d^2)},
\quad y^2 = \dfrac{b^2(b^2 - u)(b^2 - v)(b^2 - t )}{(b^2 - a^2)(b^2 - c^2)(b^2 - d^2)}\\
z^2 &=\dfrac{ c^2(c^2 - u)(c^2 - v)(c^2 -t )}{(c^2 - a^2)(c^2 - b^2)(c^2 - d^2)},\quad
  w^2 = \dfrac{d^2(d^2 - u)(d^2 - v)(d^2 - t )}{(d^2 - a^2)(d^2 - b^2)(d^2 - c^2)}.
\endaligned \end{equation}

The map 
$$\varphi: Q_0 \cap \{(x,y,z,w): xyzw\ne 0\}\to (d^2,c^2)  \times
(c^2,b^2) \times (b^2,a^2), \; $$
$\varphi (x,y,z,w) = ( u,v,t) $ is a regular covering
which defines a chart
in each orthant of the ellipsoid  $ Q_0$.

So, equations in 
 \eqref{eq:pchart}   define  parametrizations $\psi(u,v,t)=(x,y,z,w) $ of the  connected  components of the region $Q_0 \cap \{(x,y,z,w): xyzw\ne 0\}$.
 By symmetry, it is sufficient to consider only the positive octant $\{(x,y,z,w): x> 0, y>0, z>0, w>0\}$.

Consider the parametrization $ \psi(u,v,t)=\varphi^{-1}(u,v,t)=(x,y,z,w)$,
with $(x,y,z,w)$ in the positive orthant.  The fundamental forms of
$Q_0$ will be evaluated and expressed in terms of the function

\begin{equation}\label{eq:xi} \xi (\lambda ) = ( a^2  -
\lambda ) ( b^2 - \lambda )( c^2 - \lambda )( d^2  - \lambda
).\;\end{equation}

Evaluating $\gf 11 = (x_u)^2 + (y_u)^2 + (z_u)^2 + (w_u)^2 $,
$\gf 22 = (x_v)^2 + (y_v)^2 + (z_v)^2 + (w_v)^2 $ and
$\gf 33 = (x_t)^2 + (y_t)^2 + (z_t)^2 + (w_t)^2 $  and observing that
$\gf ij = 0,~i\not= j $,  it follows that the first fundamental form is given by:
{\small
\begin{equation}\label{eq:Ipchart1}
 I = -\frac 14\left[\dfrac{u (u - v)(u - t)}{\xi (u)}du^2 +
\dfrac{v (v - u)(v - t)}{\xi (v)}dv^2 + \dfrac{t (t - u)(t - v)}{\xi (t)}
dt^2 \right].\end{equation}
}

The normal vector ${N}=(\psi_u\wedge\psi_v\wedge
\psi_t)/|\psi_u\wedge\psi_v\wedge \psi_t|$   is oriented inward.
In fact, it follows that   $\langle {N}(u,v,t), \psi(u,v,t)\rangle <0.$

Similar and straightforward calculation shows that the second fundamental form with respect to $N$ is given by:
{\small
\begin{equation}\label{eq:IIpchart1} II = -\frac 14 \dfrac{ abcd }{(uvt)^{1\over
2}}\left[\dfrac{ (u - v)(u - t)}{\xi (u)}du^2 +
\dfrac{ (v - u)(v - t)}{\xi (v)}dv^2 + \dfrac{ (t- u)(t - v)}{\xi (t)}
dt^2 \right].\end{equation}
}
Therefore the  coordinate  lines are principal curvature lines and
  the principal curvatures
 $  b_{ ii} /\gf ii, \; (i =1,2,3)$ are given by:
$$l = \frac 1u \left(\dfrac{abcd}{\sqrt{ uvt } }\right),\quad
m = \frac 1v \left(\dfrac{abcd}{\sqrt{ uvt } }\right),\quad
n = \frac 1t \left(\dfrac{abcd}{\sqrt{ uvt } }\right).$$
Since $u \leq v \leq t $ it follows that  $l = m $ if, and only
if, $ u = v = c^2 $. Also   $m= n $ if, and only if,   $ v = t =
b^2.\;$ Therefore, for  $p\in Q_0 \cap \{(x,y,z,w): xyzw\ne 0\}$
it follows that the principal curvatures satisfy
$k_1(p)<k_2(p)<k_3(p)$.
 \end{proof}

   \begin{lemma}\label{lem:cc}
   Consider the ellipsoid $\mathbb E_{a,b,c}$  given  in $\mathbb R^3$ by:
   $$\frac{x^2}{a^2}+\frac{y^2}{b^2}+\frac{z^2}{c^2} = 1,\;  a>b>c>0.$$
 Let
 {\small
 $$ s_1 = \frac 12 \int_{b^2}^{a^2} \sqrt{\frac{-u}{( a^2-u ) ( c^2-u )}}  du < \infty \;  \text{ and}\;
s_2=\frac 12 \int_{c^2}^{b^2}   \sqrt{\frac{   - v }{ ( a^2-v ) ( c^2-v ) }} dv < \infty.$$
}
   There exists a parametrization
   $\varphi: [-s_1,s_1]\times  [-s_2,s_2]\to { \mathbb E}_{a,b,c}\cap \{(x,y,z), y\geq 0)\}$ such that
 the principal lines are the coordinate
 curves and $\varphi$ is conformal in the interior of the rectangle.

 Moreover $ \varphi(s_1,s_2)=U_1,\;   \varphi(-s_1,s_2)=U_2,\; \varphi(-s_1,-s_2)=U_3,   $ and $  \varphi(s_1,-s_2)=U_4. $

By symmetry considerations the same result
 holds  for   the region
${\mathbb E}_{a,b,c}\cap \{(x,y,z), y\leq 0)\}$. See
Fig. \ref{fig:cce}.

\begin{figure}[ht]
\begin{center}
\def\svgwidth{0.60\textwidth}
   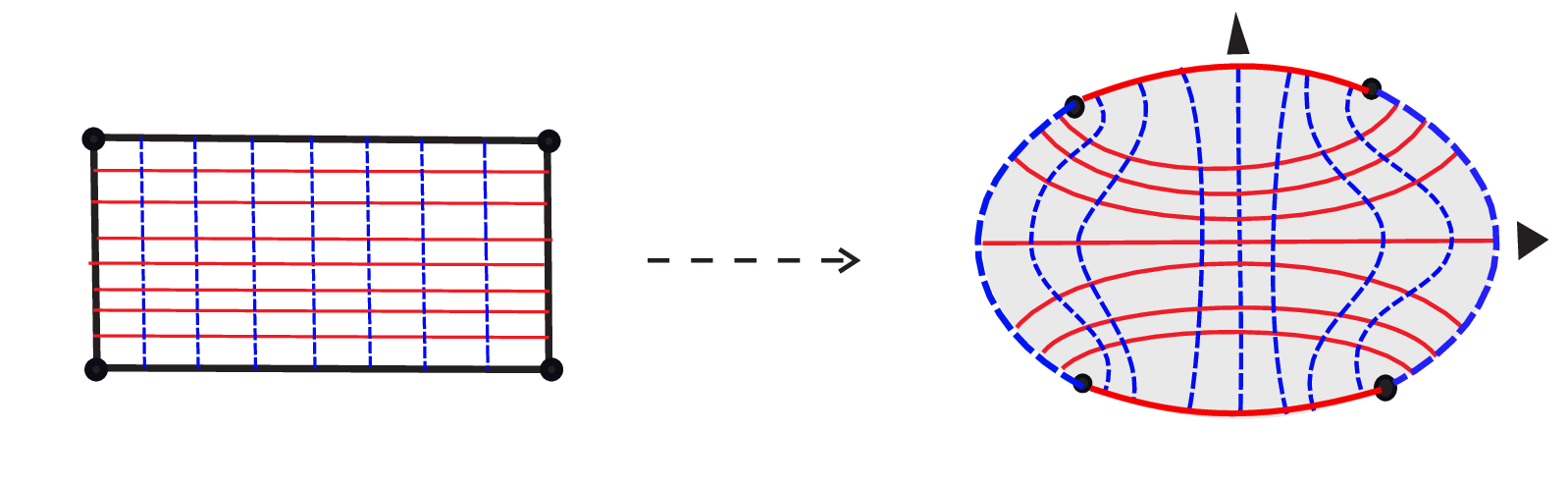
\caption{Principal lines of  the ellipsoid  $\mathbb E_{a,b,c}$ represented in a conformal chart. \label{fig:cce} }
\end{center}
\end{figure}
  \end{lemma}

  \begin{proof} The ellipsoid
 $\mathbb E_{a,b,c}$
  has a principal chart   $(u,v)$ defined by the parametrization
  $\psi:[b^2,a^2]\times [c^2,b^2]\to \{(x,y,z): x>0,\;y>0,\; z>0\}$ given by:
  $$\psi(u,v)= \left(a\sqrt{\frac{(a^2-v)(a^2-u)}{(c^2-a^2)(b^2-a^2)}}, b\sqrt{\frac{(b^2-v)(b^2-u)}{(a^2-b^2)(c^2-b^2)}},c \sqrt{\frac{(c^2-v)(c^2-u)}{(b^2-c^2) (a^2-c^2)}}\right
  ).$$
  The fundamental forms
   in this chart  are given by:
  $$\aligned I=&\; Edu^2+Gdv^2=\frac 14\frac{u(u-v)}{h(u)}du^2-\frac 14\frac{v(u-v)}{h(v)}dv^2,\\
  II=& \;edu^2+gdv^2= \frac{abc(u-v)}{4\sqrt{uv} h(u)}du^2-\frac{abc(u-v)}{4\sqrt{uv} h(v)}du^2\\
  h(t)=&( a^2-t )( b^2-t )( c^2-t ).
  \endaligned $$
  The principal curvatures are given by  $k_2(u,v)=\frac{abc}{v\sqrt{uv}},\; k_1(u,v) = \frac{abc}{u\sqrt{uv}}.$
   Therefore, $k_1(u,v)=k_2(u,v)$ if and only if $u=v=b^2$.

Considering the change of coordinates defined by
$ds_1=\sqrt{E}du$, $ds_2=\sqrt{G}dv$
 obtain a conformal
parametrization $\varphi:[0,s_1]\times [0,s_2]\to   \{(x,y,z):
x>0,\;y>0,\; z>0\}$
 in which
 the coordinate curves are principal
lines and the fundamental forms
 are given by $I=ds_1^2+ds_2^2$ and $II=k_1ds_1^2+k_2ds_2^2$.
 
 From  the symmetry of the ellipsoid
$\mathbb E_{a,b,c}$
 with respect to coordinate plane reflections,  consider an analytic continuation  of $\varphi$
 from  the rectangle $R=[-s_1,s_1]\times [-s_2,s_2]$ and
 to obtain a conformal chart $(U,V)$  of  $R$
covering  the region ${\mathbb E}_{a,b,c} \cap \{y\geq 0\}$.

 By construction $\varphi(\partial R)$ is the ellipse in
 the plane $xz$ and  the four vertices of the rectangle $[-s_1,s_1]\times [-s_2,s_2]$
 are mapped by $\varphi$  to the four umbilic points $U_i$ given by
 $\left(\pm a\sqrt{\frac{a^2-b^2}{a^2-c^2}}, 0, \pm c \sqrt{\frac{b^2-c^2}{a^2-c^2}}\right)$.
  \end{proof}

This  lemma has been included   here with a proof  for convenience of the reader. 
No explicit proof of it in the literature is known to the authors.
It will  be essential in the formulation and proof of Lemma \ref{lem:ccq2}.

   \begin{lemma}\label{lem:ccq2}
    Let $\lambda \in (d^2,c^2)$ and consider   the intersection of the quadric
    $$Q_\lambda(x,y,z,w)=\frac{x^2}{a^2-\lambda}+\frac{y^2}{b^2-\lambda}+
    \frac{z^2}{c^2-\lambda}+\frac{w^2}{d^2-\lambda}=1,\;\; a>b>c>d>0,$$
with the ellipsoid $Q_0=Q_0^{-1}(0)$.
Let $Q_\lambda= Q_\lambda^{-1}(0)\cap Q_0$.
Then $Q_\lambda$ is a  compact quartic surface with two connected components, both being  diffeomorphic to $\mathbb S^2$ and
there exists a  principal parametrization
   $\varphi_\lambda: [-s_1(\lambda),s_1(\lambda)]\times  [-s_2(\lambda),s_2(\lambda)]\to Q_\lambda \cap \{(x,y,z,w), y\geq 0, w>0)\}$ such that
 the principal lines are the coordinate curves and $\varphi$ is conformal in the interior of the rectangle.
 Here,
 $$   s_1(\lambda) = \frac 12 \int_{b^2}^{a^2} \sqrt{ \frac{(u-\lambda)u}{\xi(u)}} du < \infty \;\;\text{ and}
 \;\;\; s_2(\lambda)= \frac 12 \int_{c^2}^{b^2} \sqrt{ \frac{(v-\lambda)v}{\xi(v)}}dv < \infty,$$
 where $\xi(t)=( a^2-t ) ( c^2-t )(d^2-t)$.

 Moreover $\varphi_\lambda(s_1(\lambda),s_2(\lambda))={\mathcal P}_{12}^1(\lambda),$ \; $\varphi_\lambda(-s_1(\lambda),s_2(\lambda))={\mathcal P}_{12}^2(\lambda),$ \;  $\varphi_\lambda(-s_1(\lambda),-s_2(\lambda))={\mathcal P}_{12}^3(\lambda),$ \;
 $\varphi_\lambda(s_1(\lambda),-s_2(\lambda))={\mathcal P}_{12}^4(\lambda). $

By symmetry considerations the same result
 holds for   the region  $Q_\lambda
\cap \{(x,y,z,w), y\leq 0, w>0)\}$. See Fig. \ref{fig:cce2}. Similar
conclusions hold for  $\lambda\in (b^2,a^2)$.

\begin{figure}[ht]
\begin{center}
 \def\svgwidth{0.90\textwidth}
    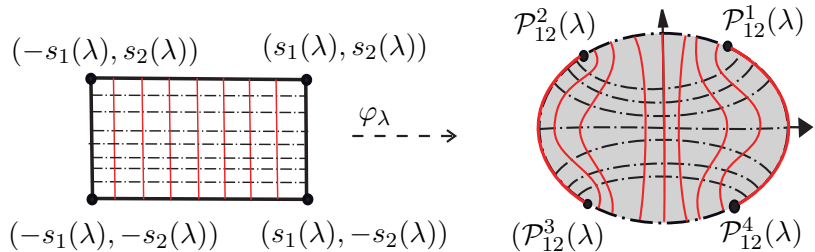
\caption{Principal lines of   the quartic $Q_\lambda$  represented in a conformal chart.  \label{fig:cce2} }
\end{center}
\end{figure}
  \end{lemma}

\begin{proof} The principal chart defined by equation \eqref{eq:pchart1} has the first and second fundamental forms given by equations \eqref{eq:Ipchart1} and \eqref{eq:IIpchart1}. Therefore, for   $\lambda \in (d^2,c^2)$,  $\psi_\lambda(u,v)=\psi(u,v,\lambda)$, $\psi:(b^2,c^2)\times (b^2,a^2)\to Q_\lambda$ is a parametrization of $Q_\lambda$
in the region $Q_\lambda
\cap \{(x,y,z,w),   w>0)\}$ by principal curvature lines. Since the quadratic form $Q_\lambda(x,y,z,w)$ has signature 1 $(+++-)$ in the    
sense of Morse, 
it follows that $Q_\lambda$
  has two connected components, one contained in the region $w>0$ and the other in the region $w<0$.
 The conclusion of the proof is similar to that of Lemma \ref{lem:cc}.
\end{proof}

   \begin{lemma}\label{lem:toro} Let $\lambda \in (c^2,b^2)$ and consider the quartic surface
 $Q_\lambda= Q_\lambda^{-1}(0)\cap Q_0$.

  Then $Q_\lambda$ is diffeomorphic to a bidimensional torus of revolution and there exists a conformal principal parametrization of $Q_\lambda$  such that
 the principal lines are the coordinates curves.

\end{lemma}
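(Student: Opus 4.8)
The plan is to realize $Q_\lambda$, for $\lambda\in(c^2,b^2)$, as the coordinate surface $\{v=\lambda\}$ of the principal chart of Lemma \ref{lem:cartaprincipal}, read off its fundamental forms by restriction, and then pin down its topology by a reflection argument parallel to the proofs of Lemmas \ref{lem:cc} and \ref{lem:ccq2}. First I would fix $\lambda\in(c^2,b^2)$ and observe that, setting $v=\lambda$ in \eqref{eq:pchart1} and letting $(u,t)$ run over $(d^2,c^2)\times(b^2,a^2)$, each of $x^2,y^2,z^2,w^2$ is strictly positive; hence $\psi_\lambda(u,t):=\psi(u,\lambda,t)$ parametrizes one orthant of $Q_\lambda\cap\{xyzw\ne0\}$. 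Since $\lambda\in(c^2,b^2)$ is disjoint from the ranges $(d^2,c^2)$ and $(b^2,a^2)$ of the other two roots of $Q(p,\mu)-1=0$, a point $p\in Q_0$ with $xyzw\ne0$ lies on $Q_\lambda^{-1}(0)$ precisely when its middle root equals $\lambda$; thus $\{v=\lambda\}=Q_0\cap Q_\lambda^{-1}(0)\cap\{xyzw\ne0\}$ and $Q_\lambda$ is its closure. The two quadrics meet transversally along $Q_\lambda$: $\nabla Q_0$ and $\nabla Q_\lambda$ are proportional at $p$ only if at most one coordinate of $p$ is nonzero, the ratios $(a^2-\lambda)/a^2,\dots,(d^2-\lambda)/d^2$ being pairwise distinct; and on $Q_\lambda$ no such point exists, since the equations force $x=y=0\Rightarrow z=w=0$ and $z=w=0\Rightarrow x=y=0$, neither of which can occur on $Q_0$. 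So $Q_\lambda$ is a smooth compact surface, lying entirely in the regular part of $Q_0$ because $k_1=k_2$ forces $v=t=b^2$ and $k_2=k_3$ forces $u=v=c^2$, both excluded by $v=\lambda\in(c^2,b^2)$.

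Next I would restrict the fundamental forms \eqref{eq:Ipchart1} and \eqref{eq:IIpchart1} to $v=\lambda$. Using $(u-\lambda)(u-t)=(\lambda-u)(t-u)$ one obtains
$$I=(t-u)\bigl[P(u)\,du^2+R(t)\,dt^2\bigr],\qquad II=(t-u)\,\frac{abcd}{\sqrt{u\lambda t}}\,\bigl[\widetilde P(u)\,du^2+\widetilde R(t)\,dt^2\bigr],$$
where $P(u)=-\tfrac14\,u(\lambda-u)/\xi(u)$, $R(t)=-\tfrac14\,t(t-\lambda)/\xi(t)$, $\widetilde P=P/u$, $\widetilde R=R/t$, with $\xi$ as in \eqref{eq:xi}; all four functions are positive on the relevant intervals, the sign count being exactly that of Lemma \ref{lem:cartaprincipal}. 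In particular the coordinate curves are principal lines of $Q_\lambda$ — equivalently, the restrictions to $Q_\lambda$ of the principal foliations $\mathcal L_3$ and $\mathcal L_1$ of $Q_0$ — and, since $t-u>0$, introducing arc-length-type coordinates $\sigma,\tau$ by $d\sigma=\sqrt{P(u)}\,du$, $d\tau=\sqrt{R(t)}\,dt$ brings $I$ into the conformal form $(t-u)\,(d\sigma^2+d\tau^2)$. The integrals giving the $\sigma$- and $\tau$-ranges converge, because $\xi$ has a simple zero at each of $u=d^2,c^2$ and $t=b^2,a^2$, so the integrands have only square-root singularities there, exactly as for $s_1,s_2$ in Lemma \ref{lem:cc}. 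This yields a conformal principal parametrization of one orthant of $Q_\lambda$ by a closed rectangle whose four sides map onto the arcs $\{x=0\},\{y=0\},\{z=0\},\{w=0\}$ of $Q_\lambda$ and whose four corners map onto the four regular points at which two of these coordinates vanish simultaneously.

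Finally I would assemble the global picture. The coordinate reflections $x\mapsto-x$, $y\mapsto-y$, $z\mapsto-z$, $w\mapsto-w$ are symmetries of both $Q_0$ and $Q_\lambda^{-1}(0)$, and reflecting the orthant patch successively across its four sides generates sixteen congruent patches tiling $Q_\lambda$. The resulting cell decomposition has $16$ two-cells, $32$ one-cells (eight on each of $\{x=0\},\{y=0\},\{z=0\},\{w=0\}$, each shared by two two-cells) and $16$ vertices (four over each of the four pairs of simultaneously vanishing coordinates), so $\chi(Q_\lambda)=16-32+16=0$. The crucial point — and the contrast with Lemmas \ref{lem:cc} and \ref{lem:ccq2}, where two sides of the fundamental rectangle collapsed onto the same coordinate hyperplane and the corner joining them was a partially umbilic point — is that here the four sides map bijectively to the zero sets of four distinct coordinates and every corner is an ordinary point of $Q_0$, so four patches meet at each vertex with total angle $2\pi$, no cone point arises, and the conformal principal parametrization glues smoothly across all sides and corners into a parametrization of $Q_\lambda$ by a flat two-torus. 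The adjacency graph of the sixteen orthants is the graph of the $4$-cube, hence connected, so $Q_\lambda$ is connected; being a connected closed orientable surface (it embeds in $Q_0\cong\mathbb S^3$) with $\chi=0$, it is diffeomorphic to the bidimensional torus.

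The step I expect to be the main obstacle is this last one: verifying carefully that $\{v=\lambda\}$ misses every partially umbilic and umbilic point, that its fundamental rectangle's four corners are therefore genuinely benign ordinary points rather than umbilic-type singularities, and that the sixteen reflected patches fit together with no identifications beyond those imposed by the coordinate symmetries — this is precisely what distinguishes the torus case $\lambda\in(c^2,b^2)$ from the sphere case of Lemma \ref{lem:ccq2}.
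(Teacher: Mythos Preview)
Your proposal is correct and considerably more detailed than the paper's own proof. For the existence of the conformal principal parametrization you follow exactly the paper's line --- restrict the chart of Lemma~\ref{lem:cartaprincipal} to $v=\lambda$ and argue as in Lemmas~\ref{lem:cc} and~\ref{lem:ccq2} --- so on that score there is nothing to add. The genuine difference is in how you identify the topology of $Q_\lambda$. The paper dispatches this in one sentence: for $\lambda\in(c^2,b^2)$ the quadratic form $Q_\lambda(x,y,z,w)$ has signature $(++--)$, and from this it concludes directly that $Q_\lambda^{-1}(0)\cap Q_0$ is a single connected component diffeomorphic to a torus, invoking (implicitly) the standard fact about the trace on an ellipsoid of a confocal quadric of given Morse signature. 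You instead build the sixteen-orthant cell decomposition explicitly, count $\chi=16-32+16=0$, verify connectedness via the $4$-cube adjacency graph, and conclude via the classification of orientable surfaces. Your route is self-contained and makes transparent exactly why the case $\lambda\in(c^2,b^2)$ differs from the sphere case of Lemma~\ref{lem:ccq2}: all four sides of the fundamental rectangle land on \emph{distinct} coordinate hyperplanes and no corner is partially umbilic, so four patches meet smoothly at every vertex and no collapsing occurs. The paper's signature argument is shorter and more conceptual, but it presupposes the reader knows how the signature of a confocal quadric governs the topology of its intersection with the ellipsoid; your approach trades that black box for an elementary, fully explicit computation.
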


\begin{proof} Similar to the proof of Lemma \ref{lem:ccq2}. For $\lambda \in (c^2,b^2)$  the quadratic form $Q_\lambda(x,y,z,w)$ has signature 2 $(++--)$, so it follows that $Q_\lambda$ has only one connected component diffeomorphic to a torus.
\end{proof}
 
\begin{remark}
Lemmas \ref{lem:cc}, \ref{lem:ccq2} and \ref{lem:toro} establish that all regular leaves of all principal foliations ${\mathcal F}_i$ ($i=1,2,3$) of the ellipsoid ${\mathbb E}_{a,b,c,d}$ are closed (no recurrences occur) and also that the singularities (partially umbilic and umbilic points) are contained in the coordinate planes.

This geometric structure is  crucial to obtain the global description of the principal foliations of the ellipsoid ${\mathbb E}_{a,b,c,d}$ with four different axes.
Examples of recurrent principal lines on surfaces diffeomorphic to a torus or to  the ellipsoid can be made explicit, see \cite{gas}  and \cite{gsln}. 
\end{remark}

 \begin{proposition}\label{prop:dupin}
  For   $\lambda \in (b^2,a^2)\cup (d^2,c^2)$, the  principal configuration $\{ {\mathcal F}_1, {\mathcal F}_2, {\mathcal P}_{12}\}$ or $\{ {\mathcal F}_2,{\mathcal F}_3, {\mathcal P}_{23}\}$    on each connected component of $Q_\lambda=Q_{\lambda}^{1}\cup Q_{\lambda}^{2}$ is principally equivalent to an  ellipsoid with three distinct axes in $\mathbb R^3$, i.e., there exists a homeomorphism $h_i:Q_{\lambda}^i\to \mathbb E_{a,b,c}$ preserving both principal foliations and singularities.

For $\lambda\in (c^2,b^2)$, $Q_\lambda$  is  principally equivalent to a  torus of revolution in $\mathbb R^3$.
 
\end{proposition}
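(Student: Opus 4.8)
The plan is to compare, on both sides, the conformal principal parametrizations already produced in Lemmas \ref{lem:cc}, \ref{lem:ccq2} and \ref{lem:toro}, and to transport one onto the other by an affine map of the parameter domains. Since the coordinate curves are principal lines in all of these charts, such a transport automatically carries the (unordered) pair of principal foliations to the pair of principal foliations of the model; and since the target is only required to be \emph{some} ellipsoid with three distinct axes (all of which are principally equivalent to one another), there is no need to match the actual semi-axes.

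First take $\lambda\in(d^2,c^2)$, the case $\lambda\in(b^2,a^2)$ being identical after exchanging $\mathcal F_1,\mathcal P_{12}$ with $\mathcal F_3,\mathcal P_{23}$. Fix the component $Q_\lambda^i$, say the one in $\{w>0\}$. Lemma \ref{lem:ccq2} gives a conformal principal chart $\varphi_\lambda$ of $Q_\lambda^i\cap\{y\ge 0\}$ over a rectangle $R_\lambda$, with the four vertices mapped to the partially umbilic points $\mathcal P_{12}^j(\lambda)$ and $\partial R_\lambda$ mapped homeomorphically onto the planar curve $Q_\lambda^i\cap\{y=0\}$; Lemma \ref{lem:cc} gives the parallel picture for $\mathbb E_{a,b,c}\cap\{y\ge 0\}$ over a rectangle $R$, with the vertices mapped to the umbilic points $U_j$ and $\partial R$ onto the $xz$-ellipse. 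Let $L:R_\lambda\to R$ be the product of the two interval rescalings pairing the $s_1$-direction with the $s_1$-direction and the $s_2$-direction with the $s_2$-direction (this pairing is the natural one, since in both lemmas the $s_1$-coordinate comes from the integration over $(b^2,a^2)$ and the $s_2$-coordinate from the integration over $(c^2,b^2)$); being a product map, $L$ carries the two families of coordinate segments to the corresponding families, vertices to vertices, and $\partial R_\lambda$ to $\partial R$. Then $h_i^{+}:=\varphi\circ L\circ\varphi_\lambda^{-1}$ is a homeomorphism $Q_\lambda^i\cap\{y\ge 0\}\to\mathbb E_{a,b,c}\cap\{y\ge 0\}$ carrying each principal foliation of $Q_\lambda^i$ onto the corresponding principal foliation of $\mathbb E_{a,b,c}$ and the partially umbilic points onto the umbilic points. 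Repeating the construction on the halves $\{y\le 0\}$, using the charts obtained there by the reflection $y\mapsto -y$, yields $h_i^{-}$; both $h_i^{\pm}$ restrict on $\{y=0\}$ to the homeomorphism of planar curves induced by $L$ on $\partial R_\lambda\to\partial R$, because in the proofs of Lemmas \ref{lem:cc} and \ref{lem:ccq2} the $\{y\le 0\}$ chart is precisely the analytic continuation of the $\{y\ge 0\}$ chart across that curve, and the product map $L$ commutes with the rectangle reflections realizing this continuation. Gluing $h_i^{+}$ and $h_i^{-}$ gives the desired homeomorphism $h_i:Q_\lambda^i\to\mathbb E_{a,b,c}$, proving the first assertion.

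For $\lambda\in(c^2,b^2)$, Lemma \ref{lem:toro} yields a conformal principal parametrization of $Q_\lambda$ whose domain, after the side identifications forced by $Q_\lambda$ being a torus, is a flat rectangular torus $\mathbb T_\lambda=\mathbb R^2/(\ell_1\mathbb Z\times\ell_2\mathbb Z)$. A torus of revolution $T\subset\mathbb R^3$ carries the analogous structure: reparametrizing $\bigl((R_0+r\cos v)\cos u,(R_0+r\cos v)\sin u,r\sin v\bigr)$ by $d\tilde v=\tfrac{r}{R_0+r\cos v}\,dv$ makes its first fundamental form $(R_0+r\cos v)^2(du^2+d\tilde v^2)$, i.e.\ conformal, with coordinate curves the meridians and parallels, both of which are principal; so $T$ is conformally principally parametrized over a flat rectangular torus $\mathbb T_T$. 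Composing the two parametrizations with a diagonal linear isomorphism $\mathbb T_\lambda\to\mathbb T_T$ (which respects the coordinate directions) produces a homeomorphism $Q_\lambda\to T$ carrying the two principal foliations of $Q_\lambda$ onto the meridian and parallel foliations of $T$, which is the second assertion.

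The one genuinely delicate point — the main obstacle — is the gluing in the ellipsoidal case: one must verify that the analytic continuations across $\{y=0\}$ used to assemble the full conformal charts on $Q_\lambda^i$ and on $\mathbb E_{a,b,c}$ are intertwined by $L$, so that $h_i^{+}$ and $h_i^{-}$ agree along the seam and the glued map is a bona fide (in particular, continuous) homeomorphism. Away from $\{y=0\}$, and in the toroidal case, the argument is essentially bookkeeping with the two parametrizations.
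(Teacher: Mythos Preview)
Your proof is correct and rests on the same three lemmas as the paper's, so the underlying content is identical; the difference is one of presentation. The paper does not build the homeomorphism by hand: after observing that Lemmas~\ref{lem:cc}, \ref{lem:ccq2} and \ref{lem:toro} endow both $Q_\lambda^i$ and the model surface with principal configurations having the same combinatorics of canonical regions (two parallel rectangles glued along their boundary in the ellipsoidal case, a single product torus in the revolution case), it simply invokes the \emph{method of canonical regions} of M.~C.~Peixoto and M.~M.~Peixoto, as adapted to principal nets by Gutierrez and Sotomayor, to conclude that a foliation-preserving homeomorphism exists. Your argument is a more explicit, more elementary instantiation of that method in this particular case: the affine rescaling $L$ of the rectangles is precisely the map between canonical regions that the general theory guarantees, and your gluing check along $\{y=0\}$ is the step the paper absorbs into the cited machinery. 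What the paper's route buys is brevity and a pointer to a reusable tool; what yours buys is self-containment and the observation that no abstract theory is really needed here, since the conformal charts already reduce everything to matching rectangles (or flat tori) coordinate-wise.
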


\begin{proof}
It follows from   Lemmas \ref{lem:cc} and \ref{lem:ccq2} that the  principal configuration of the ellipsoid with three different axes in $\mathbb R^3$ is topologically equivalent to the principal configuration of  the quartic surface  $Q_\lambda\subset E_{a,b,c,d}$ when  $\lambda\in (d^2,c^2)\cup  (b^2,a^2)$.

By Lemma \ref{lem:toro} the  principal configuration of $Q_\lambda\subset{ \mathbb E}_{a,b,c,d}$, with $\lambda\in (c^2,b^2)$,  is topologically equivalent to the principal configuration of a torus of revolution in $\mathbb R^3$.

 The construction of the homeomorphism can be done by the method of canonical regions. 
Traditionally, as explained by M. C. Peixoto and M. M. Peixoto \cite{pp},  this procedure  partitions the domain of a foliation with singularities into cells with a parallel structure and other disk-cells and annuli containing, respectively,  the singularities and periodic leaves of the foliation. These are the {\em canonical regions}. Then the homeomorphism preserving leaves between two foliations with similar partitions of canonical regions is carried out  first on the parallel regions and then extending it to the  disk-cells and annuli. 
The method of canonical regions was adapted from  to the case  of single foliations with singularities  to that of nets of transversal foliations  with singularities on surfaces, such as the principal configurations on surfaces,  in \cite{gutso}.  

In the present case the principal configuration of the ellipsoid,  see Fig. \ref{fig:cce} in lemma \ref{lem:cc}, is equivalent to that of  the quartic surface diffeomorphic to the ellipsoid, see Fig. \ref{fig:cce2} in Lemma \ref{lem:ccq2}. The canonical regions in each hemisphere are product of parallel foliations. 
 In the case of the torus, Lemma \ref{lem:toro}, both principal configurations consist  of foliations by closed curves, circles in the torus of revolution and quartic  algebraic curves on the surface $Q_\lambda$.  In this case   the canonical region is the entire surface.
\end{proof}

Take  the parametrizations 
$$\alpha_\pm (u,v,w)=(au,bv,cw,\pm d\sqrt{1-u^2-v^2-w^2})$$
of the ellipsoid $\mathbb E_{a,b,c,d}$ defined by 
 $$\frac{x^2}{a^2}
+\frac{y^2}{b^2}+\frac{z^2}{c^2}+\frac{\omega^2}{d^2}=1,\;\; a>b>c>d>0.$$

\begin{theorem} \label{th:eabcd}
The umbilic set  of  $\mathbb E_{a,b,c,d}$ is empty and its partially umbilic set  consists of four closed curves
${\mathcal P}_{12}^1, \; {\mathcal P}_{12}^2,\; {\mathcal P}_{23}^1$ and $ {\mathcal P}_{23}^3$,
that, assuming the notation
 $\;c^2=d^2+t,\; b^2=d^2+t+s,\; a^2=d^2+t+s+r,$
 
\noindent are   defined in the chart $(u,v,w)$ by:
\begin{equation} \label{eq:elip_hyp_a}
\aligned w=&0,\; s(r+s+t)u^2 + (t+s)(r+s)v^2-s(r+s)=0, \;\; \text{ellipse}\\
v=&0,\; (st+sr+s^2)u^2-tr w^2- sr=0,\;\; \text{hyperbole}.\endaligned
\end{equation}

  \noindent i)  \; The principal foliation  ${\mathcal F}_1  $
is singular on ${\mathcal P}_{12}^1\cup  {\mathcal P}_{12}^2,\;$ ${\mathcal F}_3 $
is singular on ${\mathcal P}_{23}^1\cup  {\mathcal P}_{23}^2$  and  ${\mathcal F}_2  $
is singular on ${\mathcal P}_{12}^1\cup  {\mathcal P}_{12}^2 \cup {\mathcal P}_{23}^1\cup  {\mathcal P}_{23}^2$.
 
\noindent  The partially umbilic curves ${\mathcal P}_{12}^1\cup  {\mathcal P}_{12}^2 $
 whose transversal structures are 
of type  $D_1$, the partially  umbilic 
separatrix surfaces 
of  ${\mathcal P}_{12}^1$ and $  {\mathcal P}_{12}^2$  
  span a cylinder  $\fuu C12 $ such that
$\partial\fuu C12$ $ =  {\mathcal P}_{12}^1\cup  {\mathcal P}_{12}^2$.

Also, the partially umbilic curves ${\mathcal P}_{23}^1$ and $  {\mathcal P}_{23}^2$
are of type  $D_1$, its  partially umbilic 
separatrix surfaces 
  span a cylinder  $\fuu C23 $
such that $\partial\fuu C23 =  {\mathcal P}_{23}^1\cup  {\mathcal P}_{23}^2$.

\noindent  All the leaves of    ${\mathcal F}_1  $   outside the cylinder
 $\fuu C12$ are compact and diffeomorphic to  $\mathbb S^1.\;$
 Analogously for the principal foliation ${\mathcal F}_3$ outside the cylinder $\fuu C23$. 
 See illustration in Fig. \ref{fig:pabcd} 
 
    \noindent ii)  \;  The principal foliation ${\mathcal F}_2   $ is singular at
${\mathcal P}_{12}^1\cup  {\mathcal P}_{12}^2 \cup {\mathcal P}_{23}^1\cup  {\mathcal P}_{23}^2$  and there are Hopf bands
 $ H_{123}^1$ and $ H_{123}^2 $ such that $\partial H_{123}^1 ={\mathcal P}_{12}^1\cup {\mathcal P}_{23}^1\;$ and $\partial H_{123}^2 ={\mathcal P}_{12}^2\cup {\mathcal P}_{23}^2\;$, which are partially  umbilic 
 separatrix surfaces. 
 
\noindent All the leaves of ${\mathcal F}_2$,  outside the partially umbilic surfaces separatrices, are closed.
 See illustration in Fig. \ref{fig:conexao}.

\begin{figure}[h]
\begin{center}
    \def\svgwidth{0.60\textwidth}
    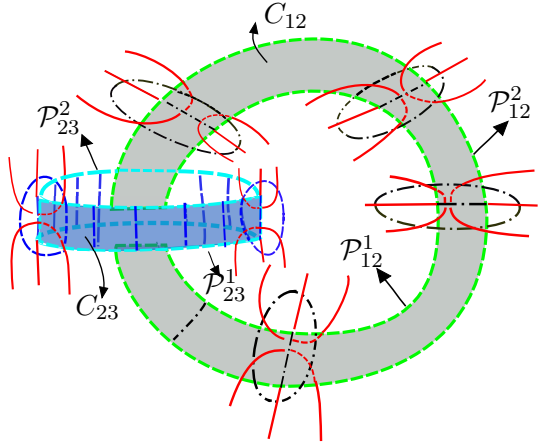
    \caption{Global behavior of the principal foliations  $\mathcal{F}_i,\; (i=1,2,3)$. The cylinder $C_{12}$ is foliated by principal lines of ${\mathcal F}_1$  with boundary two partially umbilic lines (dotted green lines). The cylinder $C_{23}$ is foliated by principal lines of $\mathcal F_3$ with boundary two partially umbilic lines (dotted blue lines). 
     }
  \label{fig:pabcd}
    \end{center}
\end{figure}

\begin{figure}[h]
\begin{center}
         \def\svgwidth{0.85\textwidth}
    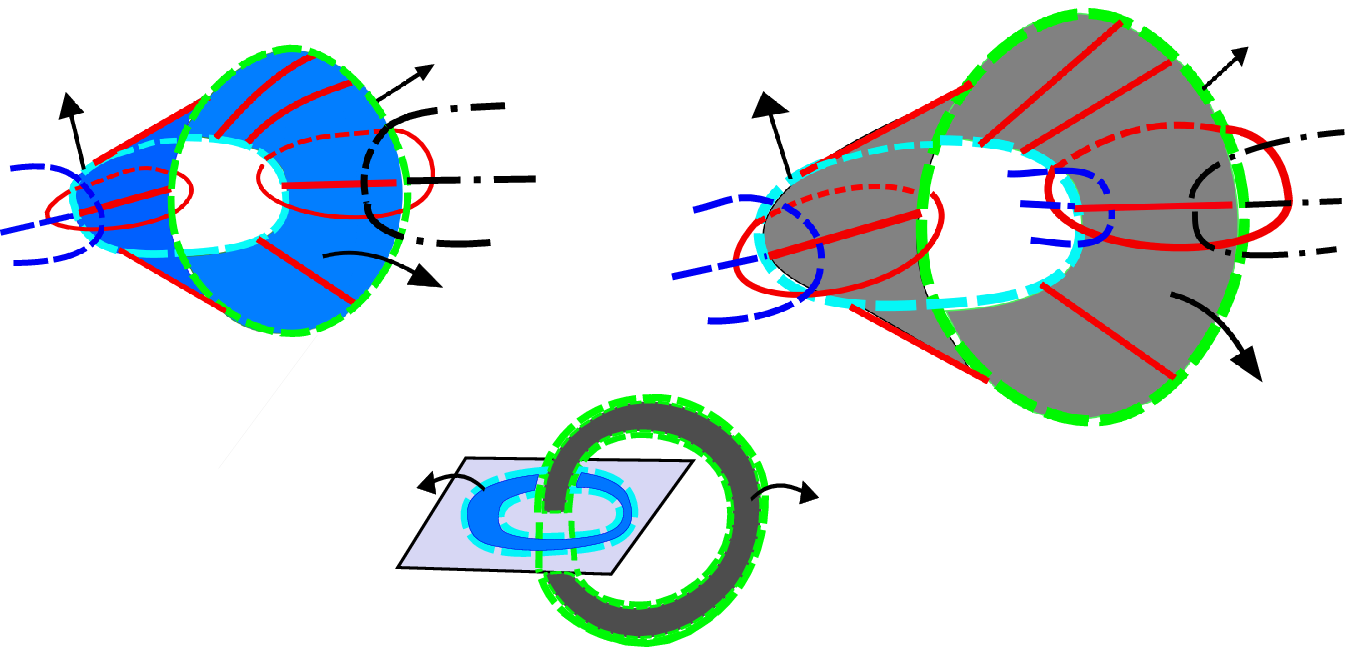
            \includegraphics[scale=0.5]{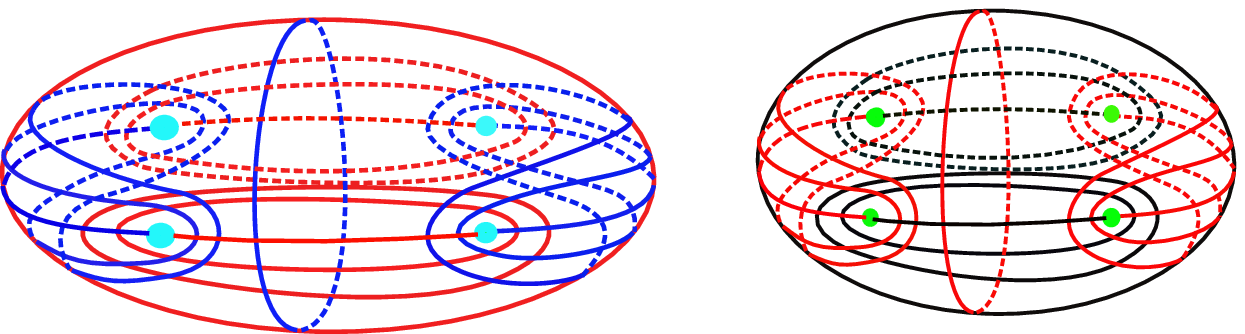}
    \caption{   Hopf bands $H_{123}^1$ and  $H_{123}^2$ with  leaves of $\mathcal{F}_2 $.
    Global behavior of the Principal Foliations  $\mathcal{F}_i  $ near 
    the partially umbilic curves, top. Bottom:  quartic surface whose $4$ umbilics  slide along the  partially umbilic  closed lines  (horizontal, blue dotted print,
    and vertical, green dotted print).
 }
  \label{fig:conexao}
    \end{center} 
\end{figure}
 
\end{theorem}
\begin{proof} It will be considered only the parametrization $\alpha_+$. The other case follows by symmetry.

In the  
parametrization
$\alpha_+,$
the first fundamental form $(g_{ij})$ is given by:
$$\aligned g_{11}=&\frac{a^2(u^2+v^2+w^2-1)-d^2u^2}{u^2+v^2+w^2-1},\;\;\;\;\; g_{12}=  -\frac{d^2uv}{u^2+v^2+w^2-1},\\
g_{22}=&\frac{b^2(u^2+v^2+w^2-1)-d^2v^2}{u^2+v^2+w^2-1},\;\;\;\;\;g_{13}=  -\frac{d^2uw}{u^2+v^2+w^2-1},\\
g_{33}=&\frac{ c^2(u^2+v^2+w^2-1)-d^2w^2}{u^2+v^2+w^2-1},\;\;\;\;\;
 g_{23}=  -\frac{d^2vw}{u^2+v^2+w^2-1}.\endaligned $$
The positive normal field (oriented inward)
$N_+=\frac{1}{abcd} \,\alpha_u\wedge \alpha_v\wedge \alpha_w$
 is given by:
$${N_+}(u,v,w)=-( \frac{ u}{a\Delta} ,  \frac{ v}{b\Delta},  \frac{ w}{c\Delta},  \frac{1}{d}), \;\;\Delta=\sqrt{1-u^2-v^2-w^2}.$$
 
The second fundamental form $(b_{ij}),$ $b_{ij}=\langle \alpha_{ij},N_+\rangle$,
relative  to $N_+$, is given by:
$$\aligned b_{11}=&
 \frac{1-v^2-w^2}{(1-u^2-v^2-w^2)^{\frac 32}},\;\;\;\;\; b_{12}=
 \frac{uv}{(1-u^2-v^2-w^2)^{\frac 32}},\\
b_{22}=&
\frac{1-u^2-w^2}{(1-u^2-v^2-w^2)^{\frac 32}},\;\;\;\;\; b_{13}=
\frac{uw}{(1-u^2-v^2-w^2)^{\frac 32}},\\
b_{33}=&
\frac{1-u^2-v^2}{(1-u^2-v^2-w^2)^{\frac 32}},
\;\;\;\;\; b_{23}=
\frac{vw}{(1-u^2-v^2-w^2)^{\frac 32}}.\endaligned$$
Calculation shows that the principal curvatures, which are the
roots of $\det (b_{ij}/|N_+|-kg_{ij})=0$,  are  given by the zeros  of

\begin{equation}\label{eq:k}
\aligned
p(k)=&  1- [(a^2-d^2) u^2+(b^2-d^2)v^2+(c^2-d^2)w^2-a^2-b^2-c^2 ]k\\
+& [ (d^2-a^2)(b^2+c^2)u^2+(a^2+c^2)(d^2-b^2) v^2\\
+& (d^2-c^2)(a^2+b^2)w^2
+  b^2 c^2+a^2 c^2+a^2 b^2]k^2\\
-&[ b^2 c^2(a^2-d^2) u^2 +a^2c^2(b^2-d^2)v^2+ a^2b^2(c^2-d^2)w^2-a^2b^2c^2]k^3.\endaligned
\end{equation}

Denote by   $R(u,v,w)=\text{resultant}(p(k),p^\prime(k),k)$, the {\em discriminant} of $p(k)$.

Consider the polynomials $p_i$ defined below.
$$\aligned
p_1=&-s[(2t+2s)v^2 -2tw^2-s]+[(s+t)v^2+t w^2]^2,\\
 p_2=&(s+t)(s+r)v^2+r tw^2  +r(r +s)\\
p_3=&[(s+t+r)u^2+tw^2]^2-(s+r)[2(t+s+r)u^2-2tw^2-s-r],\\
p_4=&s(t+s+r) u^2-r tw^2 -r s\\
p_5=&-r[2(t+s+r)u^2-2(s+t)v^2-r]+[(s+t+r)u^2+(s+t)v^2]^2,\\
 p_6=&s(s+t+r)u^2+(s+t)(s+r)v^2-s(s+r)\endaligned$$
It follows that $R(u,v,0)=-\frac 16 p_5p_6^2$, $R(u,0,w)=-\frac 16 p_3p_4^2$
and $R(0,v,w)=-\frac 16 p_1p_2^2$.

By Lemma \ref{lem:cartaprincipal} there are no umbilics   nor partially umbilic points outside the coordinate hyperplanes.

Therefore, the partially umbilic   and umbilic sets are defined by the
real
branches
of the equations $R(u,v,0)=0,\; R(0,v,w)=0,\; R(0,v,w)=0$.

Now, by  elementary analysis, it follows that the real zeros of the equations above are given by:
$$ \{(u,0,w): p_4=0\}\cup  \{(u,v,0): p_6=0\}.$$
The explicit solutions are given by:

\begin{equation*}\aligned w=&0,\; s(r+s+t)u^2 + (t+s)(r+s)v^2-s(r+s)=0, \;\; \text{(ellipse)}\\
v=&0,\; (st+sr+s^2)u^2-tr w^2- sr=0,\;\; \text{(hyperbole)}.\endaligned
\end{equation*}

Let ${\mathcal P}_{23}^1=\alpha_+(ellipse)$ and  ${\mathcal P}_{23}^1=\alpha_-(ellipse)$. It follows that ${\mathcal P}_{23}^1$ and ${\mathcal P}_{23}^2$ are   partially umbilic curves and they are  singularities of  ${\mathcal F}_2 $ and ${\mathcal F}_3$ and regular leaves of ${\mathcal F}_1$.

The hyperbole $  v= 0,\; (st+sr+s^2)u^2-tr w^2- sr=0$ is only a part of the other two
connected
 components of the partially umbilic set defined in the domain $u^2+v^2+w^2<1$.

To compute the other two
connected components of the partially umbilic set  in
a single local chart
consider the parametrizations
$$\beta_\pm (u,v,w)=(\pm a\sqrt{1-u^2-v^2-w^2},  bv , cu, dw).$$
Similar analysis as that  for $\alpha_\pm (u,v,w)$ gives that
 the partially umbilic set is defined by:

\begin{equation*}\aligned v=&0,\;  (t+s)(r+s)u^2+s(r+s+t)w^2-s(t+s)=0, \;\; \text{(ellipse)}\\
u=&0,\;rtv^2- s(t+r+s)w^2+st=0,\;\; \text{(hyperbole)}.\endaligned
\end{equation*}

Let ${\mathcal P}_{12}^1=\beta_+(ellipse)$ and  ${\mathcal P}_{12}^1=\beta_-(ellipse)$. It follows that ${\mathcal P}_{12}^1$ and ${\mathcal P}_{12}^2$ are   partially umbilic curves and they are  singularities of  ${\mathcal F}_1 $ and ${\mathcal F}_2$ and regular leaves of ${\mathcal F}_3$.

The two partially umbilic lines  ${\mathcal P}_{12}^1$ and ${\mathcal P}_{12}^2$  (respectively   ${\mathcal P}_{23}^1$ and ${\mathcal P}_{23}^2$) contained in the plane $v=0$ (respectively in the plane $w=0$)  bound  a cylinder
$C_{12}$ (respectively bound  a cylinder  $C_{23}$).  
The cylinder $C_{12}$ is diffeomorphic to the cylinder $\mathbb S^1\times [0,1]$  and the boundary curves are not linked in ${\mathbb E}_{a,b,c,d}$. It   is foliated   by principal lines of ${\mathcal F}_1$. Observe that the book structure in  Proposition 
\ref{prop:eabcc1}
   near a double partially umbilic
   green curve ${\mathcal P}_{12}$, in  Fig.  \ref{fig:pumb3caso2}
    changes 
       its transversal structure as  
    the principal configuration of an ellipsoid of revolution 
   changes into
   one with three different axes.    

 In a similar way    $C_{23}$
is diffeomorphic to $C_{12}$ and it is foliated by principal lines of 
${ \mathcal F}_3$, see Fig \ref{fig:pabcd}.

   The pair of curves  $\{ {\mathcal P}_{12}^1, {\mathcal P}_{23}^1\}$ is  linked  with linking number equal to $\pm 1$.
      This follows from Proposition \ref{prop:eaabb} and by the invariance of the linking number by homotopy, see \cite{nov}.
      
    Also $\{ {\mathcal P}_{12}^1, {\mathcal P}_{23}^1\} $ bounds  a Hopf band $H_{123}^1$,  i.e.  there is an embedding   $\beta:\mathbb S^1\times [0,1]\to {\mathbb E}_{a,b,c,d}$ such that ${\mathcal P}_{12}^1=\beta({\mathbb  S}^1\times \{0\})$ and  ${\mathcal P}_{23}^1=\beta({\mathbb S}^1\times \{1\})$  
  and the  surface $H_{123}^1$ is  foliated by leaves of the principal foliation ${\mathcal F}_2$.

   This structure of Hopf bands follows since the ellipsoid ${\mathbb E}_{a,b,c,d}$ with four different axes  belongs to a quadruply
orthogonal family of quadrics and 
its   
 principal lines are closed curves, obtained intersecting   three 
 hypersurfaces  
 of this family, see 
 the proof of 
 Lemma \ref{lem:cartaprincipal}.
 
Near each partially umbilic curve  $  {\mathcal P}_{12}^1,$ $  {\mathcal P}_{12}^2$, $ {\mathcal P}_{23}^1$ and $  {\mathcal P}_{23}^2$ the principal foliation ${\mathcal F}_2$ 
has its transversal structure  
equivalent to
the principal configuration of  
 a Darbouxian umbilic point $D_1$, see Figs.    \ref{fig:pumb3caso2},   \ref{fig:pumb3caso3}  and \ref{fig:conexao} and Propositions \ref{prop:eabcc1} and \ref{prop:eabcc2}.  So the umbilic separatrix surface of ${\mathcal F}_2$ of  the  partially umbilic curves $ {\mathcal P}_{12}^1$ and $ {\mathcal P}_{23}^1  $ is a cylinder having 
 these curves    
in its  
boundary. 
As the pair $\{ {\mathcal P}_{12}^1, {\mathcal P}_{23}^1\} $ is linked, it follows that this invariant cylinder by the principal foliation ${\mathcal F}_2$ is a Hopf 
band.

  In similar way the pair     $\{ {\mathcal P}_{12}^2, {\mathcal P}_{23}^2\}$
   bounds
    a Hopf band $H_{123}^2$
  which is foliated by leaves of ${\mathcal F}_2$, see Fig. \ref{fig:conexao}.
 \end{proof}

\section{Concluding Comments} \label{sec:CR}

In this work has been established the geometric structure  (principal configuration)  defined on an ellipsoid
in
$\mathbb R^4$
by its umbilic and partially  umbilic points and by  the integral foliations of  its principal plane and line fields.
The results have been   proved  analytically  and    illustrated  graphically.

The configurations established in Proposition \ref{prop:eaabb} and  Theorem \ref{th:eabcc3}, respectively,
for the bi and triaxial ellipsoids   $E_{a,a,b,b}$  and $E_{a,c,c,b}$, have no parallel in the literature.   The second case  establishing
the existence of
four umbilic points   which separate  four arcs of partially umbilic points,   altogether   forming   a  closed regular curve, as illustrated in figure \ref{fig:pumb3b},
maybe
  regarded as the most novel  contribution of  this paper.

The explanation for the quadriaxial ellipsoid  $\mathbb E_{a,b,c,d}$  in  subsection \ref{ss:eabcd} 
 makes explicit and  gives more analytic and geometric details not found  in
previous studies on this matter found in literature:  \cite{garcia-tese},
 \cite{garcia3}. 
More details  not formulated explicitly
 neither illustrated in previous 
 works   are provided in figures \ref{fig:pabcd} and \ref{fig:conexao}.

 A crucial
 step
 in the study of lines of principal curvature  on surfaces in $\mathbb R^3$
 was given
 by  Darboux  who in 1887
  established
the local   principal configurations at generic umbilic points on analytic surfaces.
See \cite{dar}
and its extension from analytic to  smooth generic surfaces   by Gutierrez and Sotomayor in  1982  \cite{gutso}.

The determination of the  local principal configurations around the generic singularities of the
principal configuration of a  smooth hypersurface in $\mathbb R^4$ was  achieved  by Garcia
in 1989
 \cite{garcia-tese}. See also \cite{garcia}  and \cite{garcia3}.  This  can be considered as the analogue   for $\mathbb R^4$  of Darboux result
 mentioned above for  $\mathbb R^3$.
  A new proof  of  Garcia's  result, which also allows an extension
  to study  the  generic bifurcations  of  the singularities  of principal configurations  in  families of hypersurfaces
  depending generically on one parameter will appear in   \cite{de-so-ga}.
  See  also \cite{debora_tese}.

The local and global  study of principal configurations of
surfaces in $\mathbb R^3$   
 has been 
developed in several directions. A
very partial  list of references is given as a sample:
 \cite{ga-me-so},
\cite{gas}, \cite{gho},  \cite{gutso},    \cite{lang}, \cite{melo},
\cite{ruas},   \cite{xav}, \cite{soto}.

Meanwhile,  for the case of 
 hypersurfaces in   $\mathbb R^4$
 it seems that,  besides    the few papers cited in this work,
a wide horizon of possibilities wait to be explored.

\vskip .7cm
\noindent {\bf Acknowledgments.}
 The first author was partially supported by a doctoral fellowship  CAPES/CNPq.
The second author participated in the    FAPESP Thematic Project
2008/02841-4 and has a  fellowship  CAPES PVNS at UNIFEI.  The
second and third authors are fellows of CNPq and participated of
the project CNPq  Proc. 476672/2009-00. The authors were supported
by Pronex/FAPEG/CNPq Proc. 2012 10 26 7000 803. 


\bibliographystyle{plain}

{ \newpage

\author{\noindent D\'ebora Lopes  \\ Departamento de Matem\'atica\\ Universidade Federal 
de Sergipe\\
Av. Marechal Rondon, s/n Jardim Rosa Elze - CEP 49100-000\\
São Crist\'ov\~ao, SE, Brazil}
 \email{deb@deboralopes.mat.br}

\vskip 0.7cm
\author{\noindent Jorge Sotomayor\\ Instituto de Matem\'atica e Estat\'{\i}stica \\
Universidade  de S\~ao Paulo\\
 Rua do Mat\~ao  1010,
Cidade Universit\'aria, CEP 05508-090,\\
S\~ao Paulo, S. P, Brazil}
 \email{sotp@ime.usp.br}

 \vskip 0.7cm

 \author{\noindent Ronaldo Garcia\\Instituto de Matem\'atica e Estat\'{\i}stica \\
Universidade Federal de Goi\'as\\ CEP 74001--970, Caixa Postal 131 \\
Goi\^ania, Goi\'as, Brazil}
 \email{ragarcia@ufg.br}
}
\end{document}